\providecommand{\texorpdfstring}[2]{#1}
\providecommand{\url}[1]{#1}
\renewcommand{\le}{\leqslant}
\renewcommand{\ge}{\geqslant}
\renewcommand{\O}{\underset{\ffrown}{<}}
\renewcommand{\P}{\operatorname{\mathsf{P}}} 
\newcommand{\E}{\operatorname{\mathsf{E}}} 
\newcommand{\Var}{\operatorname{\mathsf{{V}ar}}} 
\newcommand{\ii}[1]{\operatorname{\mathsf{I}}\{#1\}}
\DeclareMathOperator*{\avee}{\operatorname{\mathsf{ave}}}
\newcommand{\are}{\mathsf{ARE}\,}  
\newcommand{\AS}{\mathsf{AS}}
\newcommand{\AL}{\mathsf{AL}}                
\newcommand{\OS}{\mathsf{OS}}
\newcommand{\OL}{\mathsf{OL}} 
\renewcommand{\AA}{\mathsf{A}}
\renewcommand{\O}{\mathsf{O}}
\newcommand{\SSS}{\mathsf{S}}
\renewcommand{\L}{\mathsf{L}}
\renewcommand{\mod}{\mathrm{mod}}
\newcommand{\R}{\mathbb{R}}                  
\newcommand{\N}{\mathbb{N}}
\newcommand{\al}{\alpha}
\newcommand{\be}{\beta}
\newcommand{\de}{\delta}
\newcommand{\De}{\Delta}
\renewcommand{\th}{\theta}
\newcommand{\Th}{\Theta}
\newcommand{\tth}{\vartheta}
\newcommand{\si}{\sigma}
\newcommand{\tS}{\tilde S}
\newcommand{\vpi}{\varphi}
\newcommand{\Om}{\Omega}
\newcommand{\tu}{{\tilde u}}
\newcommand{\tv}{{\tilde v}}
\newcommand{\XX}
{\R^d}
\newcommand{\A}{\mathcal{A}}
\newcommand{\U}{\mathcal{U}}
\newcommand{\X}{\mathbf{X}}
\newcommand{\Y}{\mathbf{Y}}
\newcommand{\dd}{\operatorname{d}\!}
\newcommand{\sign}{\operatorname{sign}}
\newcommand{\e}{E}
\newtheorem*{theorem*}{Theorem (Chen--Shao \cite{chen07})}
\newtheorem{theorem}{Theorem}
\newtheorem{corollary}[theorem]{Corollary}
\newtheorem{lemma}[theorem]{Lemma}
\newtheorem{proposition}[theorem]{Proposition}
\theoremstyle{remark}
\newtheorem{remark}[theorem]{Remark}
\numberwithin{equation}{section}
\numberwithin{theorem}{section}
\begin{document} 

\begin{frontmatter}

\title{Asymptotic relative efficiency of the Kendall and Spearman correlation statistics}
\runtitle{
ARE, Kendall's and Spearman's}

\begin{aug}
 \author{\fnms{Iosif}  \snm{Pinelis}\corref{}\ead[label=e1]{ipinelis@mtu.edu}}
 \runauthor{I. Pinelis}
 \affiliation{Michigan Technological University}
 \address{Department of Mathematical Sciences\\Michigan Technological University\\Hough\-ton, Michigan 49931\\\printead{e1}}
\end{aug} 

\begin{abstract} 
A necessary and suffcient condition for Pitman's asymptotic
relative effciency (ARE) of the Kendall and Spearman correlation statistics for the independence test to be 1 is given, in terms of certain smoothness and nondegeneracy properties of the model. Corresponding easy to use and  broadly applicable sufficient conditions are obtained, which are then illustrated on several known models of dependence. Effects of the presence or absence of the smoothness and/or nondegeneracy parts of the mentioned necessary and suffcient condition are demonstrated using certain specially
constructed dependence models. A more general (than usual) version of Pitman's ARE is developed, with broader and easier to check conditions of applicability. This version of the ARE, which is then used in the rest of the paper, may also be of value elsewhere. 
%
\end{abstract}



%

\begin{keyword}
\kwd{asymptotic relative efficiency}
\kwd{correlation statistics}
\kwd{Kendall's statistic}
\kwd{Spearman's statistic}
\kwd{nonparametric tests}
\kwd{tests of independence}
\kwd{association function}
\kwd{models of dependence}
\kwd{copulas}
\end{keyword}

\end{frontmatter}

\tableofcontents 

\section{Introduction}
\label{intro}

The correlation statistics most commonly used in nonparametric tests of independence are Kendall's $T$ and Spearman's $S$. How do they compare with each other in terms of efficiency?


Quoting from \cite[page~844]{kruskal58}:

\begin{quote}
As between $r_S[=S]$ and $t[=T]$ qua estimators, they are not really in general competition,
for they are estimators of different population quantities. It may be noted
that $\tau[=\E T]$ is simpler to interpret than $\rho_S[=\frac{n+1}{n-2}\,\E S-\frac3{n-2}\,\E T\approx\E S\text{ for large }n]$. Some authors consider it important that
$r_S$ is easier to compute than $t$. \\ 
{[...]} \\ 
On the grounds of simplicity of interpretation, reasonable sensitivity to
form of distribution, and relative simplicity of sampling theory, I prefer the
use of $\tau$ and $t$ to that of $\rho_S$ and $r_S$.
\end{quote}

Kendall and Gibbons (1990) \cite{ken48} argue that confidence
intervals for Spearman's $S$ are less reliable and less interpretable than confidence intervals
for Kendall's $T$.

It is concluded in \cite{croux-dehon} that for the normal model ``The Kendall correlation measure is more robust
and slightly more efficient than Spearman's rank correlation''. 

Even though, as mentioned in the above quote from \cite{kruskal58}, $S$ and $T$ may not be in general competition as estimators, they can be compared as test statistics, especially in independence tests (but also, more generally, in testing the strength of an association). This comparison will be made in the present paper in terms of Pitman's asymptotic relative efficiency ($\are$), introduced in \cite{pitman48} (as referenced in \cite{hajek99}). As noted in \cite{stu54a}, ``Pitman's result is equivalent to the use of estimating efficiency as a criterion''; some details on this will be given at the end of Section~\ref{sect:are}. 

It is well known (see e.g.\ \cite[page~844]{kruskal58}) that, for the independence test for \emph{normal} samples, the $\are$ of both $T$ and $S$ with respect (w.r.) to Pearson's sample correlation coefficient is $9/\pi^2[\approx0.912]$ and hence 
the $\are$ of $T$ w.r.\ to $S$ is $1$. 
In \cite{xu-etal13}, it was suggested based on plotting that, again in the normal model, the $\are$ of $T$ w.r.\ to $S$ is $>1$ for all nonzero values of the true (population) correlation coefficient. This, and more, was actually proved in \cite{mono}. 

%

Farlie \cite{far60} considered dependence models given by the formula  
\begin{equation}\label{eq:lin}
	F_\th=F_0+\th\De,
\end{equation}
where for each value of a real parameter $\th$, $F_\th$ is a continuous joint cumulative distribution function (cdf) of a pair $(X,Y)$ of real-valued random variables (r.v.'s) with marginal cdf's $G$ and $H$, not depending on $\th$; 
\begin{equation}\label{eq:F_0=Gh}
	F_0(x,y)=G(x)H(y), 
\end{equation}
so that $\th=0$ corresponds to the independence of $X$ and $Y$; 
\begin{equation}\label{eq:De=}
	\De(x,y)=G(x)H(y)A(G(x))B(H(y)), 
\end{equation}
and $A$ and $B$ are some functions; here and elsewhere in the paper, $x$ and $y$ stand for arbitrary real numbers, unless otherwise indicated.
Models of the form \eqref{eq:lin}--\eqref{eq:F_0=Gh} may be referred to as \emph{linear} models. 

For the particular class of linear models with $\De(x,y)$ as in \eqref{eq:De=} with additional mild regularity assumtions on $A$ and $B$, it follows from \cite{far60} that the $\are$ of $T$ w.r.\ to $S$ for the independence test is $1$. 
Because a linear model is a local approximation of a model smooth w.r.\ to the parameter, and in view of the mentioned result for normal samples, one may conjecture that the $\are$ of $T$ w.r.\ to $S$ will ``usually'' be $1$ for the independence test, at least for smooth enough models.  

In this paper, it will be shown that this conjecture is mainly true. More specifically, for arbitrary continuous models of dependence, we shall provide a condition that is necessary and sufficient for the $\are$ of $T$ w.r.\ to $S$ for the independence test to be $1$. This necessary and sufficient condition may actually be viewed as a conjunction of two conditions: (i) a certain degree of smoothness w.r.\ to the parameter $\th$ and (ii) a certain degree of nondegeneracy. 
  
To operate in full generality, we found it useful, and maybe even necessary, to present a more general and simple approach to Pitman's $\are$. 
The resulting notion of $\are$ is, at least in some aspects, more broadly applicable. In particular, we require no specific rate of convergence of the alternative value of $\th$ to the null value $\th_0$ -- in contrast with e.g.\ \cite{noe55},  
\cite[Assumptions E and E$'$, pages 57 and 59]{hoeff55}, \cite[page~271]{fraser57}, \cite[\S\S 25.5--25.6]{kenstu79}, where only alternatives of the form $\th=\th_0+k/n^r$ were considered; here, $k$ and $r>0$ are real constants and $n$ is the sample size. Also, our derivation of the expression for the $\are$ is actually simpler than known ones. 

The rest of the paper is organized as follows. 

In Section~\ref{sect:are}, we present the mentioned more general and simple approach to Pitman's $\are$. As in all other relevant sources on the notion of Pitman's $\are$, there we require a certain condition of uniform convergence to normality of the test statistics. 

In Section~\ref{setting}, this uniform convergence for $T$ and $S$ is stated in Proposition~\ref{prop:S}, 
%
with explicit expressions for the corresponding asymptotic means and variances. The main result of this section and of the entire paper is Theorem~\ref{th:}, which provides the mentioned necessary and sufficient condition for the $\are$ of $T$ w.r.\ to $S$ for the independence test to be $1$. 
It is also explained in Section~\ref{setting} that this necessary and sufficient condition may be viewed as a smoothness-and-nondegeneracy condition. 

In Section~\ref{suff}, based on Theorem~\ref{th:}, we state a few easy to use sufficient conditions for the $\are$ of $T$ w.r.\ to $S$ for the independence test to be $1$. 

In Section~\ref{plackett}, these sufficient conditions are illustrated on a few common  infinitely smooth models of dependence: the mentioned ``linear'' model, including Farlie's one; the  bivariate normal (BVN) model; Plackett's model; and Frank's model. It appears that similar applications can be made for 
most, if not all, of the 22 particular models of dependence (copulas) listed in \cite[Table~4.1]{nelsen06}, as well as for many other ones. 

In Section~\ref{MICD}, 
carefully selected/constructed specific models of dependence are considered to 
show effects of the presence or absence of the smoothness and/or nondegeneracy parts of the necessary and sufficient condition for the $\are$ of $T$ w.r.\ to $S$ for the independence test to be $1$.  

The necessary proofs are deferred to Section~\ref{proofs}.



\section{A general notion of the asymptotic relative efficiency}
\label{sect:are}

Let 
$\e,\e_1,\e_2,\dots$ be random elements of some measurable space, defined on another measurable space $(\Om,\A)$, and let $(\P_\th)_{\th\in\Th}$ be a parametric family 
of probability measures on $(\Om,\A)$ such that the random elements $\e,\e_1,\e_2,\dots$ are i.i.d.\ w.r.\ to each of the probability measures $\P_\th$ with $\th\in\Th$; here the parameter space $\Th$ is assumed to be an open interval on the real line $\R$. 
As usual, let $\E_\th$ and $\Var_\th$ denote the expectation and variance w.r.\  to the probability measure $\P_\th$. 

Suppose that for each $j=1,2$ and each natural $n$ we have a statistic $T_{j,n}$, which is a function of $\e_1,\dots,\e_n$, such that 
\begin{equation}\label{eq:BE}
	\P_\th(Z_{j,n}(\th)\le z)\underset{n\to\infty}\longrightarrow\Phi(z)
\end{equation}
uniformly in $z\in\R$ and $\th\in\Th_0$, where $\Phi$ is the standard normal cdf, 
\begin{equation}\label{eq:Z_n}
	Z_{j,n}(\th):=\frac{T_{j,n}-\mu_j(\th)}{\si_j(\th)/\sqrt n}, 
\end{equation}
$\Th_0$ is a neighborhood of a given point $\th_0\in\Th$, and $\mu_j$ and $\si_j$ are real-valued functions on $\Th_0$ such that 
\begin{equation}\label{eq:si}
\text{$\si_j$ is continuous and strictly positive,}	
\end{equation}
whereas 
\begin{equation}\label{eq:mu}
\text{$(\mu_j(\th)-\mu_j(\th_0))(\th-\th_0)>0$ for all $\th\in\Th_0\setminus\{\th_0\}$;} 	
\end{equation}
that is, $\mu_j(\th)-\mu_j(\th_0)$ is of the same sign as $\th-\th_0$ for all $\th\in\Th_0\setminus\{\th_0\}$. 

One may refer to $\mu_j(\th)$ and $\si_j(\th)$ respectively as the asymptotic mean and the asymptotic standard deviation of the statistic $T_{j,n}$. Conditions \eqref{eq:BE} and \eqref{eq:mu} justify the use of the ``asymptotic $z$-tests'' 
\begin{equation}\label{eq:tests}
	\de_{j,n}:=\ii{Z_{j,n}(\th_0)>z_\al}
\end{equation}
to test the null hypothesis $H_0\colon\th=\th_0$ versus the right-side alternative 
$H_0\colon\th>\th_0$, where $\ii\cdot$ denotes the indicator, $\al\in(0,1)$, and $z_\al:=\Phi^{-1}(1-\al)$. 
In view of \eqref{eq:BE}, the asymptotic size of these tests will be $\al$; that is, 
\begin{equation*}
\E_{\th_0}\de_{j,n}=\P_{\th_0}\big(Z_{j,n}(\th_0)>z_\al\big)\to\al 	
\end{equation*}
as $n\to\infty$; here and elsewhere in this paper, $j=1,2$. 

Suppose now that $n_1\in\N$, $n_2\in\N$, and $\tth\in\Th$ vary together so that $n_1\to\infty$, $n_2\to\infty$,  $\tth\downarrow\th_0$, and  
\begin{equation}\label{eq:to1-be}
	\E_\tth\de_{j,n_j}=\P_\tth\big(Z_{j,n_j}(\th_0)>z_\al\big)\to1-\be 
\end{equation}
for $j=1,2$, where 
$\be\in(0,1-\al)$ is given; that is, the alternative value $\tth>\th_0$ and the sample sizes $n_1$ and $n_2$ are such that the type II error probabilities for both tests $\de_{1,n_1}$ and $\de_{2,n_2}$ converge to the prescribed value $\be$. One may note that the total of the prescribed limit error probabilities of both types, I and II, is assumed here to be less than $1$: $\al+\be<1$. 

By \eqref{eq:Z_n}, 
\begin{equation*}
	Z_{j,n_j}(\th_0)=Z_{j,n_j}(\tth)\,\frac{\si_j(\tth)}{\si_j(\th_0)}
	+\frac{\mu_j(\tth)-\mu_j(\th_0)}{\si_j(\th_0)/\sqrt n_j}. 
\end{equation*}
Therefore, \eqref{eq:to1-be} can be rewritten as 
\begin{equation*}
	\P_\tth\big(Z_{j,n_j}(\tth)>z_{\al,\be}\big)\to1-\be, \quad\text{where}\quad
	z_{\al,\be}:=\Big(z_\al-\frac{\mu_j(\tth)-\mu_j(\th_0)}{\si_j(\th_0)/\sqrt n_j}\Big)
	\frac{\si_j(\th_0)}{\si_j(\tth)}.  
\end{equation*}
In view of the uniform convergence in \eqref{eq:BE}, it is now easy to see that $z_{\al,\be}\to z_{1-\be}$. Also, by the condition $\tth\downarrow\th_0$ and the continuity and positivity of $\si_j$, we have $\si_j(\tth)\to\si_j(\th_0)>0$. It follows that 
\begin{equation}\label{eq:sqrt n_j}
	\sqrt{n_j}\sim(z_\al-z_{1-\be})\Big/\frac{\mu_j(\tth)-\mu_j(\th_0)}{\si_j(\th_0)}
\end{equation}
and hence 
\begin{equation}\label{eq:re}
	\frac{n_2}{n_1}\sim\frac{\si^2_2(\th_0)}{\si^2_1(\th_0)}\,
	\frac{\big(\mu_1(\tth)-\mu_1(\th_0)\big)^2}{\big(\mu_2(\tth)-\mu_2(\th_0)\big)^2};
\end{equation}
as usual, we write $A\sim B$ for $A/B\to1$. 
Note that the latter displayed expression does not depend on $\al$ and $\be$, as long as $\al>0$, $\be>0$, and $\al+\be<1$. The condition $\al+\be<1$ is needed in order to have $z_\al-z_{1-\be}>0$ -- because otherwise \eqref{eq:sqrt n_j} cannot hold, in view of the conditions $\tth\downarrow\th_0$, \eqref{eq:mu}, and \eqref{eq:si}. 

The above consideration justifies the following notion of the (right-side) asymptotic relative efficiency ($\are$) of the sequence $T_1:=(T_{1,n})_{n=1}^\infty$ of statistics relative to the sequence $T_2:=(T_{2,n})_{n=1}^\infty$ at the point $\th_0\in\Th$: 
\begin{equation}\label{eq:are}
	\are_{T_1,T_2}(\th_0):=\frac{\si^2_2(\th_0)}{\si^2_1(\th_0)}\,
	\lim_{\th\downarrow\th_0}\frac{\big(\mu_1(\th)-\mu_1(\th_0)\big)^2}{\big(\mu_2(\th)-\mu_2(\th_0)\big)^2}
\end{equation}
if the latter limit exists (in $[0,\infty]$). 

More generally, one can define the upper and lower versions of the $\are$ by replacing $\lim$ in \eqref{eq:are} by $\limsup$ and $\liminf$, respectively. 

Similarly, one can define the left-side and two-side versions of the $\are$ by replacing $\th\downarrow\th_0$ in \eqref{eq:are} by $\th\uparrow\th_0$ and $\th\to\th_0$, respectively. 
In what follows, the two-side version of the $\are$ will be considered, unless otherwise indicated. 

These definitions of the $\are$ are more general than known ones, and their applicability is more easily verified. Indeed, the only conditions imposed here in the derivation of \eqref{eq:re} on the functions $\mu_j$ and $\si_j$ in \eqref{eq:BE}--\eqref{eq:Z_n} are \eqref{eq:si} and \eqref{eq:mu}. These latter conditions may be compared with conditions A, B, C, D (or D$'$) in \cite{noe55}, which involve derivatives of $\mu_j$ up to a certain order and a particular rate of $\tth$ approaching $\th_0$; as for our condition \eqref{eq:BE}, it is essentially the same as condition D in \cite{noe55}. 

On the other hand, if for $j=1,2$ and some natural $k$ the functions $\mu_j$ are indeed $k$ times differentiable at $\th_0$ and 
\begin{equation}\label{eq:noether}
	\mu_j'(\th_0)=\dots=\mu_j^{(k-1)}(\th_0)=0<\mu_j^{(k)}(\th_0) 
\end{equation}
(cf.\ condition A in \cite{noe55}), 
then it follows from \eqref{eq:are} that 
\begin{equation}\label{eq:are-diff}
	\are_{T_1,T_2}(\th_0)=\frac{\si^2_2(\th_0)}{\si^2_1(\th_0)}\,
	\frac{\mu_1^{(k)}(\th_0)^2}{\mu_2^{(k)}(\th_0)^2}. 
\end{equation}

More generally, by the mean value theorem, it follows from the two-side version of \eqref{eq:are} that 
\begin{equation}\label{eq:are'}
	\are_{T_1,T_2}(\th_0)=\frac{\si^2_2(\th_0)}{\si^2_1(\th_0)}\,
	\lim_{\th\to\th_0}\frac{\mu'_1(\th)^2}{\mu'_2(\th)^2}
\end{equation}
whenever this limit is well defined and exists, and similarly for the right- and left-side versions of the $\are$. 
One may note here that, in view of the continuity of $\si_j(\th)$ in $\th$, $\are_{T_1,T_2}(\th)$ is continuous in $\th$ at $\th=\th_0$
whenever the limit in \eqref{eq:are'} is well defined and exists.   

We shall see that, in one of the models to be considered later in this paper (namely, in what we shall refer to as the MICD AL model), the Noether-like condition \eqref{eq:noether} does not hold. In contrast, formula \eqref{eq:are'} appears to hold almost universally. 

One may also note that, in contrast with formula \eqref{eq:are-diff}, the definition \eqref{eq:are} is invariant w.r.\ to any re-parametrization of the form $\tau=h(\th)$, where $h$ is any strictly increasing continuous function and $\tau$ is the new parameter, replacing $\th$. 

For all these reasons, the general approach to the notion of $\are$ shown here may be useful elsewhere. 

To satisfy condition \eqref{eq:BE} of uniform convergence to normality, one can use Berry--Esseen (BE)-type bounds. The importance, and the lack at the time, of such bounds was emphasized by Kiefer \cite{kiefer68} in 1968. Since then, there has been significant progress in this direction. For instance, BE-type bounds of order $1/\sqrt n$ were obtained for $U$-statistics -- see e.g.\ \cite{kor94,chen07}; for the Student statistic -- see e.g.\ \cite{bent96,
student_els}; for maximum likelihood estimators -- see e.g.\ \cite{michel-pfanzagl71,MLE-BE}; and for general classes of nonlinear statistics -- see e.g.\  \cite{chen07,nonlinear-publ}. 

Definition \eqref{eq:are} of the $\are$, which is most suitable for comparisons of statistical tests,  goes back to Pitman. This ``testing'' notion of the $\are$ is closely related to, and one may even say is essentially the same as, the notion of the $\are$ in statistical estimation. Indeed, consider the following heuristics. If \eqref{eq:BE} holds, then one will usually have $\E_\th T_{j,n}\approx\mu_j(\th)$ and $\Var_\th T_{j,n}\approx\si_j(\th)^2/n$ (for large $n$). So, if $\mu_j(\th)$ is strictly increasing and smooth enough in $\th$, then for $\tilde T_{j,n}:=\mu_j^{-1}(T_{j,n})$ one will have $\E_\th \tilde T_{j,n}\approx\th$ (so that $\tilde T_{j,n}$ is an asymptotically unbiased estimator of $\th$) and 
\begin{equation*}
\Var_\th\tilde T_{j,n}\approx(\mu_j^{-1})'(\mu_j(\th))^2\frac{\si_j(\th)^2}n
=\frac1{\mu_j'(\th)^2}\,\frac{\si_j(\th)^2}n,	
\end{equation*}
by the delta method (see e.g.\ \cite{nonlinear-publ}). 
Thus, the ``estimation $\are$'' (see e.g.\ \cite{stuart-ord})  
\begin{equation*}
	\lim_{n\to\infty}\frac{\Var_\th\tilde T_{2,n}}{\Var_\th\tilde T_{1,n}}
\end{equation*}
is in agreement with the expressions \eqref{eq:are-diff} (for $k=1$), \eqref{eq:are'}, and \eqref{eq:are} for the ``testing'' $\are$.

\section{Kendall's and Spearman's correlation statistics and their asymptotic relative efficiency}
\label{setting}
From now on, the random elements $\e,\e_1,\e_2,\dots$, introduced in the beginning of Section~\ref{sect:are}, will be 
\begin{equation}\label{eq:Ps}
	\text{i.i.d.\ random points}\quad P:=(X,Y),\quad P_1:=(X_1,Y_1),\quad P_2:=(X_2,Y_2),\quad \dots
\end{equation} 
in $\R^2$, that is, pairs of real-valued r.v.'s. 
For each natural $n$, let 
\begin{equation*}
	\X_n:=(X_1,\dots,X_n)\quad\text{and}\quad\Y_n:=(Y_1,\dots,Y_n). 
\end{equation*}

To avoid complications resulting from ties between observations, we shall assume that for each $\th\in\Th$ the distribution of $(X,Y)$ is continuous, in the sense that the joint cdf $F_\th$ defined by the formula 
\begin{equation}\label{eq:F}
	F_\th(x,y):=\P_\th(X\le x,Y\le y)
\end{equation}
for real $x,y$ is continuous; this is equivalent to the continuity of the marginal cdf's $G_\th$ and $H_\th$ of $X$ and $Y$, defined by the conditions 
\begin{equation}\label{eq:G,H}
	G_\th(x):=\P_\th(X\le x)\quad\text{and}\quad H_\th(x):=\P_\th(Y\le y)
\end{equation}
for real $x,y$. Any equality between r.v.'s will be understood as taking place almost surely w.r.\  to the probability measures $\P_\th$ for all $\th\in\Th$. 

Kendall's difference sign correlation statistic is defined by the formula 
\begin{equation}\label{eq:T}
	T_n:=\binom n2^{-1}\sum_{1\le i<j\le n}\sign(X_i-X_j)(Y_i-Y_j).  
\end{equation}
Thus, $T_n$ is a U-statistic (of the form \eqref{eq:U_n}) and hence the mentioned BE-type bounds for \break 
$U$-statistics are directly applicable to $T_n$. 

For $n\ge3$, consider also Spearman's rank correlation statistic
\begin{equation}\label{eq:S}
	S_n:=\frac{\sum_{i=1}^n[R_i(\X_n)-(n+1)/2]\,[R_i(\Y_n)-(n+1)/2]}
	{\sqrt{\sum_{i=1}^n[R_i(\X_n)-(n+1)/2]^2} \sqrt{\sum_{i=1}^n[R_i(\Y_n)-(n+1)/2]^2}},
\end{equation}
which is obtained from Pearson's sample correlation coefficient by the replacing the $X_i$'s and $Y_i$'s with their respective ranks 
\begin{equation}\label{eq:R}
	R_i(\X_n)=1+\sum_{j=1}^n\ii{X_i>X_j}\quad\text{and}\quad R_i(\Y_n)=1+\sum_{k=1}^n\ii{Y_i>Y_k}.  
\end{equation}

Since each of the sets $\{R_1(\X_n),\dots,R_n(\X_n)\}$ and $\{R_1(\Y_n),\dots,
R_n(\Y_n)\}$ coincides with the set $\{1,\dots,n\}$, the expression for $S_n$ in \eqref{eq:S} can be simplified: 
\begin{equation*}
	S_n=\frac{12}{n(n^2-1)}\,\sum_{i=1}^n R_i(\X_n)R_i(\Y_n)-3\frac{n+1}{n-1}. 
\end{equation*}
In view of considerations in \cite[Section~9(e)]{hoeff48}  and \cite[Section~5c]{hoeff63},  
$S_n$ is also a U-statistic (of the form \eqref{eq:U_n}), but with a kernel $h$ depending on $n$. Therefore, it appears preferable here to approximate $S_n$ by a U-statistic with a kernel not depending on $n$ -- as will be done in the proof of Proposition~\ref{prop:S}.

\begin{proposition}\label{prop:S}
Condition \eqref{eq:BE}--\eqref{eq:Z_n} holds with $T_{1,n}=T_n$, $T_{2,n}=S_n$, 
\begin{align}
	\mu_1(\th)&=\mu_T(\th):=4\E_\th F_\th(X,Y)-1, \label{eq:mu_T} \\ 
\si^2_1(\th)&=\si^2_T(\th):=16\Var_\th\big(2F_\th(X,Y)-G_\th(X)-H_\th(Y)\big) \label{eq:si_T} \\ 
	\mu_2(\th)&=\mu_S(\th):=12\E_\th G_\th(X)H_\th(Y)-3, \label{eq:mu_S} \\ 
\si^2_2(\th)&=\si^2_S(\th)
:=144\Var_\th\Big( (1-G_\th(X))(1-H_\th(Y)) \notag \\ 
&\ \qquad\qquad\qquad\qquad+\int_\R F_\th(X,y)\dd H_\th(y)
+\int_\R  F_\th(x,Y)\dd G_\th(x)\Big) \label{eq:si_S} 
\end{align}
uniformly in $z\in\R$ and $\th\in\Th_0$, for any $\Th_0\subseteq\Th$ such that 
\begin{equation}\label{eq:inf si>0}
	\min_{j=1,2}\inf_{\th\in\Th_0}\si^2_j(\th)>0.
\end{equation} 
\end{proposition}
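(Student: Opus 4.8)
The plan is to verify the uniform central limit theorem \eqref{eq:BE}--\eqref{eq:Z_n} by exhibiting each of $T_n$ and $S_n$ as a U-statistic with a \emph{bounded} kernel not depending on $n$ (exactly so for $T_n$, and up to a negligible remainder for $S_n$) and then invoking the Chen--Shao Berry--Esseen bound for U-statistics \cite{chen07}. The crucial observation is that, because these kernels are bounded by absolute constants, every moment entering that bound is bounded uniformly in $\th$, so the only hypothesis of the bound that is not automatic is a uniform-in-$\th$ lower bound on the asymptotic variance -- and this is precisely what \eqref{eq:inf si>0} supplies. Thus the Chen--Shao estimate will take the form $\sup_z|\P_\th(Z_{j,n}(\th)\le z)-\Phi(z)|\le C/\sqrt n$ with $C$ not depending on $\th\in\Th_0$, which is exactly \eqref{eq:BE}.

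For Kendall's statistic, $T_n$ is by \eqref{eq:T} a U-statistic of degree $2$ with the bounded kernel $h_T(P_1,P_2)=\sign(X_1-X_2)(Y_1-Y_2)$. First I would compute its mean: using exchangeability of $P_1,P_2$ and the absence of ties, the four orthant probabilities reduce to $\E_\th h_T=4\,\P_\th(X_1>X_2,Y_1>Y_2)-1=4\,\E_\th F_\th(X,Y)-1=\mu_T(\th)$, which is \eqref{eq:mu_T}. Next I would compute the first Hoeffding projection $h_{T,1}(x,y)=\E_\th h_T((x,y),P_2)=4F_\th(x,y)-2G_\th(x)-2H_\th(y)+1$; since the asymptotic variance of a degree-$2$ U-statistic is $4\Var_\th(h_{T,1})$, this gives $\si_T^2(\th)=16\Var_\th\big(2F_\th(X,Y)-G_\th(X)-H_\th(Y)\big)$, which is \eqref{eq:si_T}. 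The Chen--Shao bound then yields the uniform convergence for $j=1$.

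For Spearman's statistic I would use the representation going back to \cite[Section~9(e)]{hoeff48} and \cite[Section~5c]{hoeff63}: $S_n=\tfrac{n-2}{n+1}\,\hat\rho_n+\tfrac{3}{n+1}\,T_n$, where $\hat\rho_n$ is the degree-$3$ U-statistic with a bounded, $n$-independent, symmetric kernel $h_S$ satisfying $\E_\th\hat\rho_n=12\,\E_\th G_\th(X)H_\th(Y)-3=\mu_S(\th)$, that is, \eqref{eq:mu_S}. Since $\hat\rho_n$ and $T_n$ are bounded, $S_n-\hat\rho_n=\tfrac{3}{n+1}(T_n-\hat\rho_n)=O(1/n)$ deterministically, and likewise $\E_\th S_n-\mu_S(\th)=O(1/n)$; hence $\sqrt n\,(S_n-\mu_S(\th))/\si_S(\th)$ and $\sqrt n\,(\hat\rho_n-\mu_S(\th))/\si_S(\th)$ differ by $O(1/\sqrt n)$ uniformly in $\th$, using $\si_S\ge\text{const}>0$ from \eqref{eq:inf si>0}. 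I would then compute the first projection $h_{S,1}(x,y)=\E_\th h_S((x,y),P_2,P_3)$ and check that, up to an additive constant, it equals $4\big[(1-G_\th(x))(1-H_\th(y))+\int_\R F_\th(x,y')\,\dd H_\th(y')+\int_\R F_\th(x',y)\,\dd G_\th(x')\big]$; since the asymptotic variance of a degree-$3$ U-statistic is $9\Var_\th(h_{S,1})$, this yields $\si_S^2(\th)=144\Var_\th(\cdots)$, which is \eqref{eq:si_S}. Applying Chen--Shao to $\hat\rho_n$ and then transferring the uniform CLT to $S_n$ by a Slutsky-type argument (the uniform continuity of $\Phi$ absorbs the $O(1/\sqrt n)$ discrepancy) completes the case $j=2$.

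The routine parts are the uniformity (automatic from bounded kernels together with \eqref{eq:inf si>0}) and the two mean computations. The main obstacle I anticipate lies in the Spearman variance: correctly setting up the fixed-kernel degree-$3$ representation with a remainder that is negligible \emph{uniformly} in $\th$, and then carrying out the bookkeeping of the degree-$3$ projection $h_{S,1}$ -- expanding $\E_\th[h_S\mid P_1]$ into the cdf's $F_\th,G_\th,H_\th$ and the two marginal integrals -- so that it collapses to exactly the combination appearing in \eqref{eq:si_S} (in particular producing the term $(1-G_\th)(1-H_\th)$ rather than $G_\th H_\th$, since the two differ by genuinely non-constant terms that would alter the variance).
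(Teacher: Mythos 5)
Your proposal is correct and follows essentially the same route as the paper: both exhibit $T_n$ as a degree-$2$ U-statistic and $S_n$ as a fixed-kernel degree-$3$ U-statistic plus a deterministically $O(1/n)$ remainder (the paper does this by expanding $\sum_i R_i(\X_n)R_i(\Y_n)$ into the kernel \eqref{eq:h_3}), then apply the Chen--Shao bound, using boundedness of the kernels and \eqref{eq:inf si>0} for uniformity in $\th$ and the boundedness of the normal density to absorb the remainder, and finally identify $\mu_j,\si_j$ via the same first-projection computations (e.g.\ $g_{h_2;\th}=4F_\th-2G_\th-2H_\th+1$ and the collapse to $(1-G_\th)(1-H_\th)+\int F_\th\dd H_\th+\int F_\th\dd G_\th$). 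The only cosmetic difference is your appeal to the exact Hoeffding identity relating $S_n$, $\hat\rho_n$ and $T_n$, whose precise coefficients are immaterial since only the deterministic $O(1/n)$ bound on $S_n-\hat\rho_n$ is used.
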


All necessary proofs are deferred to Section~\ref{proofs}. 

\medskip
\hrule
\medskip

Consider the association function $a_\th\colon\R^2\to\R$ between the r.v.'s $X$ and $Y$ defined by the formula 
\begin{equation}\label{eq:a}
	a_\th(x,y):=\P_\th(X\le x,Y\le y)-\P_\th(X\le x)\P_\th(Y\le y)=F_\th(x,y)-G_\th(x)H_\th(y)
\end{equation}
for real $x,y$; cf.\ \eqref{eq:F} and \eqref{eq:G,H}. 
Obviously, $a_\th=0$ if and only if $X$ and $Y$ are independent w.r.\  to the probability measure $\P_\th$. The cases $a_\th\ge0$ and $a_\th\le0$ may then be referred to as the cases of positive and negative dependence between $X$ and $Y$ w.r.\  to $\P_\th$. 

Note that the values of the statistics $T_n$ and $S_n$ do not change if $X_1,\dots,X_n,
Y_1,\dots,Y_n$ are replaced by $\psi_1(X_1),\dots,\psi_1(X_n),
\psi_2(Y_1),\dots,\psi_2(Y_n)$, respectively, where $\psi_1$ and $\psi_2$ are any strictly increasing functions. 
Moreover, with the already assumed convention that any equality between r.v.'s will be understood almost surely w.r.\  to $\P_\th$ for all $\th\in\Th$, we see that the statistics $T_n$ and $S_n$ do not change if $X_1,\dots,X_n,Y_1,\dots,Y_n$ are replaced by $G_\th(X_1),\dots,G_\th(X_n),\break 
H_\th(Y_1),\dots,H_\th(Y_n)$, respectively. This follows because (w.r.\  to $\P_\th$) the r.v.'s $G_\th(X_i)$ and $H_\th(Y_j)$ have the uniform distribution $\U[0,1]$ on the interval $[0,1]$, which is of course continuous. 

Therefore, we may, and indeed will, assume in the rest of the paper that the marginal cdf's $G_\th$ and $H_\th$ do not depend on $\th$: 
\begin{equation}\label{eq:const marg}
	G_\th=G\quad\text{and}\quad H_\th=H
\end{equation}
for some cdf's $G$ and $H$ and for all $\th\in\Th$. 
In particular, if each of the cdf's $G$ and $H$ coincides with the cdf of $\U[0,1]$, then $F_\th$ is a copula; a systematic treatment of copulas is given in \cite{nelsen06}. 

Further, let us assume that $0\in\Th$ and 
\begin{equation}\label{eq:indep}
	F_0(x,y)=G(x)H(y)
\end{equation}
for all real $x$ and $y$; that is, the hypothesis that $\th=0$ means that the r.v.'s $X$ and $Y$ are independent.

We are now ready to state the main result: 

\begin{theorem}\label{th:} Suppose that $T_{1,n}=T_n$, $T_{2,n}=S_n$. Suppose also that conditions \eqref{eq:si} and \eqref{eq:mu} hold for $\th_0=0$, some neighborhood $\Th_0$ of $0$, and $\mu_1(\th),\si_1^2(\th), 
\mu_2(\th),\si_2^2(\th)$ as in Proposition~\ref{prop:S}. 
Finally, suppose that conditions  \eqref{eq:const marg} and \eqref{eq:indep} hold 
as well. 
Then the following conditions are equivalent to one another: 
\emph{
\begin{enumerate}[(I)]
	\item $\are_{T,S}(0)=1$; 
	\item $
	\dfrac{\E_\th a_\th(X,Y)}{\E_0 a_\th(X,Y)}\underset{\th\to0}\longrightarrow1$; 
	\item $
	\dfrac{\int_{\R^2}(F_\th-F_0)\dd\,(F_\th-F_0)}{\int_{\R^2}(F_\th-F_0)\dd F_0}\underset{\th\to0}\longrightarrow0$; that is, 
	$\dfrac{\int_{\R^2}a_\th\dd\,a_\th}{\int_{\R^2}a_\th\dd F_0}\underset{\th\to0}\longrightarrow0$.  
\end{enumerate}
}
\end{theorem}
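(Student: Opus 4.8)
The plan is to push everything through the association function $a_\th=F_\th-F_0$, exploiting that, since $F_\th$ and $F_0$ share the marginals $G,H$ by \eqref{eq:const marg}, the function $a_\th$ vanishes on the lines at infinity. First I would evaluate the ingredients at $\th=0$. Under \eqref{eq:indep} the pair $(X,Y)$ is independent, so $U:=G(X)$ and $V:=H(Y)$ are independent $\U[0,1]$ variables; substituting $F_0(X,Y)=UV$, $\int_\R F_0(X,y)\dd H(y)=U/2$ and $\int_\R F_0(x,Y)\dd G(x)=V/2$ into \eqref{eq:si_T}--\eqref{eq:si_S} and computing the elementary moments of $U,V$ gives $\si_T^2(0)=\tfrac49$ and $\si_S^2(0)=1$, whence $\si_S^2(0)/\si_T^2(0)=\tfrac94$. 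The same substitution shows $\mu_T(0)=\mu_S(0)=0$, so by the two-side form of \eqref{eq:are} it remains to study $\are_{T,S}(0)=\tfrac94\lim_{\th\to0}(N_\th/D_\th)^2$, where I set $N_\th:=\mu_T(\th)$ and $D_\th:=\mu_S(\th)$.

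The core step is to rewrite $N_\th$ and $D_\th$ through $a_\th$. Writing $F_\th=F_0+a_\th$ and expanding, $\mu_S(\th)=12\int_{\R^2}F_0\dd a_\th$ and $\mu_T(\th)=4(\int_{\R^2}F_0\dd a_\th+\int_{\R^2}a_\th\dd F_0+\int_{\R^2}a_\th\dd a_\th)$. Here I would establish the bivariate integration-by-parts identity $\int_{\R^2}F_0\dd a_\th=\int_{\R^2}a_\th\dd F_0=\E_0 a_\th(X,Y)$: integrating by parts once in $x$ and once in $y$ carries $\int F_0\dd a_\th$ into $\int a_\th\dd F_0$ with no net sign change, and every boundary term vanishes because $a_\th\equiv0$ along the lines at infinity while $G(-\infty)=H(-\infty)=0$. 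Together with the elementary identity $\int_{\R^2}a_\th\dd a_\th=\int a_\th\dd F_\th-\int a_\th\dd F_0=\E_\th a_\th(X,Y)-\E_0 a_\th(X,Y)$, this yields the clean formulas $D_\th=12\,\E_0 a_\th(X,Y)$ and $N_\th=4(\E_\th a_\th(X,Y)+\E_0 a_\th(X,Y))$.

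With these formulas the three equivalences come out algebraically. Putting $r_\th:=\E_\th a_\th(X,Y)/\E_0 a_\th(X,Y)$, which is well defined for $\th$ near $0$ since \eqref{eq:mu} forces $D_\th\ne0$, one finds $N_\th/D_\th=\tfrac13(1+r_\th)$. By \eqref{eq:mu} both $N_\th$ and $D_\th$ carry the sign of $\th$, so $N_\th/D_\th>0$; hence $\are_{T,S}(0)=\tfrac94\lim(N_\th/D_\th)^2$ equals $1$ iff $N_\th/D_\th\to\tfrac23$ iff $r_\th\to1$, which is exactly (II). Finally (II)$\Leftrightarrow$(III) is immediate from $r_\th-1=(\E_\th a_\th(X,Y)-\E_0 a_\th(X,Y))/\E_0 a_\th(X,Y)=\big(\int_{\R^2}a_\th\dd a_\th\big)/\big(\int_{\R^2}a_\th\dd F_0\big)$, the very ratio appearing in (III).

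I expect the main obstacle to be the rigorous justification of the two-dimensional Lebesgue--Stieltjes integration by parts and the vanishing of its boundary terms under only the continuity hypotheses on the cdf's: one must treat $a_\th$ as a function of bounded variation, make sense of the mixed measure $\dd a_\th$, and control the limits at infinity through Fubini together with the equal-marginals property $a_\th(\pm\infty,\cdot)=a_\th(\cdot,\pm\infty)=0$. The variance evaluations, though elementary, likewise require checking that the integrals in \eqref{eq:si_T}--\eqref{eq:si_S} are finite and that \eqref{eq:inf si>0} is in force, so that Proposition~\ref{prop:S} may be invoked.
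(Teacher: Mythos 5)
Your proposal is correct and follows essentially the same route as the paper: compute $\si_T^2(0)=\tfrac49$ and $\si_S^2(0)=1$, reduce $\mu_T(\th)/\mu_S(\th)$ to $\tfrac13\big(1+\E_\th a_\th(X,Y)/\E_0 a_\th(X,Y)\big)$ via the identity $\int_{\R^2}F_0\,\dd F_\th=\int_{\R^2}F_\th\,\dd F_0$, use \eqref{eq:mu} to fix the sign of $\mu_T/\mu_S$, and read off the three equivalences. The one step you flag as delicate --- the two-dimensional Lebesgue--Stieltjes integration by parts --- is obtained in the paper with no boundary-term analysis at all, by the symmetrization $\E_\th F_0(X,Y)=\P(X_0<X_\th,\,Y_0<Y_\th)=\P(X_\th<X_0,\,Y_\th<Y_0)=\E_0 F_\th(X,Y)$ for independent points $(X_0,Y_0)\sim F_0$ and $(X_\th,Y_\th)\sim F_\th$, using inclusion--exclusion and $\P(X_\th<X_0)=\P(Y_\th<Y_0)=\tfrac12$ from the common continuous marginals \eqref{eq:const marg}; this is your vanishing-boundary-terms computation in probabilistic form, and adopting it removes the only technical obstacle you anticipate.
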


\begin{remark}\label{rem:mu incr}
Condition \eqref{eq:mu} will be used in the proof of Theorem~\ref{th:} only to obtain the implication (I)$\implies$(II) in the statement of Theorem~\ref{th:}; cf.\ \eqref{eq:mu incr used}. So, the implications (II)$\iff$(III)$\implies$(I) in the statement of Theorem~\ref{th:} will hold even without condition \eqref{eq:mu}. However, condition \eqref{eq:mu} is needed to justify the use of the ``asymptotic $z$-tests'' \eqref{eq:tests}. 
\end{remark}

\begin{remark}\label{rem:SND}
The numerator, $\int_{\R^2}(F_\th-F_0)\dd\,(F_\th-F_0)$, of the ratio in condition~(III) of Theorem~\ref{th:} is a quadratic form in $F_\th-F_0$, and so, this numerator is $O(\th^2)$ if $F_\th$ is smooth enough in $\th$ in an appropriate sense. On the other hand, the denominator, $\int_{\R^2}(F_\th-F_0)\dd F_0$, of that ratio is asymptotically proportional to $\th$ (as $\th\to0$) if e.g.\ the derivative of $\int_{\R^2}F_\th\dd F_0$ in $\th$ at $\th=0$ exists and is nonzero. 
Thus, condition (III) of Theorem~\ref{th:} will be ``typically'' satisfied. So, ``typically'' one will have $\are_{T,S}(0)=1$. 
It is also clear from this consideration that condition (III), and hence condition (I), of Theorem~\ref{th:} may fail to hold if the denominator $\int_{\R^2}(F_\th-F_0)\dd F_0$ is degenerate in the sense that the derivative of $\int_{\R^2}F_\th\dd F_0$ in $\th$ at $\th=0$ exists but equals $0$. 
Thus, we shall refer to condition (III) as the smoothness-and-nondegeneracy (SND) condition. 
This remark will be elaborated on in Section~\ref{suff} and 
illustrated in Section~\ref{models}. 
\end{remark}

\section{General conditions sufficient for SND}\label{suff}

In this section, we shall present general and usually easy to verify general conditions sufficient for SND and hence for $\are_{T,S}(0)=1$. Some of these results will be used in Section~\ref{models} concerning specific models of dependence. 

First here, let us formalize the part of Remark~\ref{rem:SND} concerning the nondegeneracy. 
The following proposition provides sufficient conditions for the denominator of the ratio in condition (III) in Theorem~\ref{th:} to approach $0$ \emph{no faster} than a nonzero constant multiple of $\th$ as $\th\to0$. 

\begin{proposition}\label{prop:ND}
If 
\begin{equation}\label{eq:der ne0}
	\Big(\frac{\dd}{\dd\th}\int_{\R^2}F_\th\dd F_0\Big)\Big|_{\th=0}\ne0 
\end{equation}
or 
\begin{equation}\label{eq:derr ne0}
	\int_{\R^2}\Big(\frac{\dd}{\dd\th}F_\th\Big|_{\th=0}\Big)\,\dd F_0\ne0,  
\end{equation}
then 
\begin{equation}\label{eq:ND}
	\liminf_{\th\to0}\Big|\frac1\th\,\int_{\R^2}(F_\th-F_0)\dd F_0\Big|>0. 
\end{equation} 
\end{proposition}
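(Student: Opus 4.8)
The plan is to reduce \eqref{eq:ND} to the limiting behavior of a single difference quotient, and then to dispatch the two hypotheses in turn. Put $I(\th):=\int_{\R^2}F_\th\dd F_0$; since $0\le F_0\le1$ and $\dd F_0$ is a probability measure, the value $I(0)=\int_{\R^2}F_0\dd F_0$ is a finite constant, and for $\th\ne0$
\begin{equation*}
	\frac1\th\int_{\R^2}(F_\th-F_0)\dd F_0=\frac{I(\th)-I(0)}\th .
\end{equation*}
Because $t\mapsto|t|$ is continuous, as soon as the right-hand side has a nonzero limit $\ell$ as $\th\to0$ we obtain
\begin{equation*}
	\liminf_{\th\to0}\Big|\frac1\th\int_{\R^2}(F_\th-F_0)\dd F_0\Big|=|\ell|>0,
\end{equation*}
which is exactly \eqref{eq:ND}. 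So under either hypothesis it is enough to exhibit such a nonzero limit.

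Under \eqref{eq:der ne0} there is nothing further to prove: by the very definition of the derivative, $\dfrac{I(\th)-I(0)}\th\to I'(0)=\Big(\dfrac{\dd}{\dd\th}\int_{\R^2}F_\th\dd F_0\Big)\Big|_{\th=0}$, and this limit is nonzero by assumption, so the reduction applies with $\ell=I'(0)$.

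Under \eqref{eq:derr ne0} the plan is to recognize the difference quotient as an integral and to pass the limit inside it. Writing $\dot F_s:=\frac{\dd}{\dd\th}F_\th\big|_{\th=s}$, we have
\begin{equation*}
	\frac{I(\th)-I(0)}\th=\int_{\R^2}\frac{F_\th-F_0}\th\,\dd F_0 ,
\end{equation*}
and the integrand converges, as $\th\to0$, pointwise to $\dot F_0$ at every $(x,y)$ where this derivative exists. If this convergence may be carried through the integral, then the difference quotient tends to $\int_{\R^2}\dot F_0\dd F_0$, which is nonzero by \eqref{eq:derr ne0}, and the reduction applies with $\ell=\int_{\R^2}\dot F_0\dd F_0$.

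The main obstacle is precisely this interchange of limit and integral, that is, upgrading \eqref{eq:derr ne0} to \eqref{eq:der ne0}. I would settle it by dominated convergence: at each point where $\th\mapsto F_\th(x,y)$ is differentiable near $0$, the mean value theorem gives $\frac{F_\th-F_0}\th=\dot F_{\th^*}$ for some $\th^*$ between $0$ and $\th$, so it suffices that $g:=\sup_{0<|s|\le\de}|\dot F_s|$ be $\dd F_0$-integrable for some $\de>0$. This local integrable majorant is exactly the condition under which the two derivatives in \eqref{eq:der ne0} and \eqref{eq:derr ne0} coincide; producing it (or an equivalent uniform-integrability bound for the difference quotients) is the one genuinely analytic step, while everything else is formal. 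It is here that I would concentrate the careful estimates, using the boundedness $0\le F_\th\le1$ together with whatever regularity of the map $\th\mapsto F_\th$ is available.
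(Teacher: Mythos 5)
Your handling of \eqref{eq:der ne0} coincides with the paper's: the quantity in \eqref{eq:ND} is the difference quotient of $I(\th):=\int_{\R^2}F_\th\,\dd F_0$ at $0$, so it converges to $I'(0)\ne0$ by the very definition of the derivative. The problem is the second implication, and you have in effect flagged it yourself: you reduce \eqref{eq:derr ne0}$\implies$\eqref{eq:ND} to the existence of a $\dd F_0$-integrable majorant $g=\sup_{0<|s|\le\de}|\dot F_s|$ for the difference quotients, but you never produce one, and none follows from the hypotheses. Indeed \eqref{eq:derr ne0} involves only the pointwise derivative \emph{at} $\th=0$, so even your mean-value-theorem step $\frac{F_\th-F_0}{\th}=\dot F_{\th^*}$ is unavailable (it needs differentiability of $\th\mapsto F_\th(x,y)$ on a whole punctured neighborhood of $0$). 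As written, the second half of your argument establishes the implication only under an additional, unstated regularity hypothesis, so it is not a proof of the proposition.

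The idea you are missing is that dominated convergence is the wrong tool when only a one-sided conclusion is wanted. The paper settles \eqref{eq:derr ne0}$\implies$\eqref{eq:ND} by Fatou's lemma: with $\dot F_0:=\frac{\dd}{\dd\th}F_\th\big|_{\th=0}$, the nonnegative functions $\big|\frac{F_\th-F_0}{\th}\big|$ converge pointwise to $|\dot F_0|$, whence
\begin{equation*}
\liminf_{\th\to0}\int_{\R^2}\Big|\frac{F_\th-F_0}{\th}\Big|\,\dd F_0
\;\ge\;\int_{\R^2}|\dot F_0|\,\dd F_0
\;\ge\;\Big|\int_{\R^2}\dot F_0\,\dd F_0\Big|>0,
\end{equation*}
with no majorant and no differentiability away from $\th=0$. (The paper's one-line proof leaves implicit the passage from this bound on $\int|\cdot|\,\dd F_0$ to the bound on $|\int\cdot\,\dd F_0|$ required by \eqref{eq:ND}; this is where sign information on the difference quotients enters --- in all the paper's applications $\dot F_0$ has constant sign --- but in any case Fatou, not domination, is what makes hypothesis \eqref{eq:derr ne0} usable as stated.)
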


In the rest of this section, various conditions sufficient for the numerator of the ratio in condition (III) in Theorem~\ref{th:} to approach $0$ \emph{faster} than a nonzero constant multiple of $\th$ as $\th\to0$ will be given. 
Thus, using propositions in the rest of this section together with 
Proposition~\ref{prop:ND}, one will have various sufficient conditions for SND and hence for $\are_{T,S}(0)=1$. 

The following proposition is obvious. 

\begin{proposition}\label{prop:variation}
Consider the variation distance 
\begin{equation}\label{eq:d}
d(\th):=\|F_\th-F_0\|:=\int_{\R^2}|\dd\,(F_\th-F_0)|
\end{equation}
and the Kolmogorov distance 
\begin{equation*}
\rho(\th):=\sup_{(x,y)\in\R^2}\big|F_\th(x,y)-F_0(x,y)\big| 
\end{equation*}
between the cdf's $F_\th$ and $F_0$. 
Then   
\begin{equation}\label{eq:num}
	\frac1\th\,\int_{\R^2}(F_\th-F_0)\,d(F_\th-F_0)\underset{\th\to0}\longrightarrow0
\end{equation}
whenever
\begin{equation}\label{eq:d rho}
	\tfrac1\th\,d(\th)\rho(\th)\underset{\th\to0}\longrightarrow0. 
\end{equation}
So, one has SND and hence $\are_{T,S}(0)=1$ if conditions \eqref{eq:ND} and \eqref{eq:d rho} hold.
\end{proposition}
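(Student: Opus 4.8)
The plan is to establish the implication \eqref{eq:d rho}$\implies$\eqref{eq:num} by a single domination estimate, and then to splice it together with Proposition~\ref{prop:ND} and Theorem~\ref{th:} to reach the stated conclusion. The quantity $\int_{\R^2}(F_\th-F_0)\dd\,(F_\th-F_0)$ is a Lebesgue--Stieltjes integral of the bounded function $F_\th-F_0$ against the finite signed measure generated on $\R^2$ by the cdf difference $F_\th-F_0$; this integral is well defined precisely because the total variation of that signed measure is $\int_{\R^2}|\dd\,(F_\th-F_0)|=d(\th)$, which is finite for $\th$ near $0$ in the only nontrivial case (if $\rho(\th)=0$ then $F_\th=F_0$ and the numerator integral vanishes outright).

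First I would record the elementary bound $\big|\int g\,\dd\nu\big|\le\big(\sup|g|\big)\,|\nu|(\R^2)$, valid for any bounded measurable $g$ and any finite signed measure $\nu$ with total variation measure $|\nu|$. Applying it with $g=F_\th-F_0$ and $\nu=\dd\,(F_\th-F_0)$, and identifying $\sup_{\R^2}|F_\th-F_0|=\rho(\th)$ and $|\nu|(\R^2)=d(\th)$, yields at once
\[
	\Big|\int_{\R^2}(F_\th-F_0)\dd\,(F_\th-F_0)\Big|\le\rho(\th)\,d(\th).
\]
Dividing by $|\th|$ and invoking the hypothesis then gives \eqref{eq:num}; here one notes that, since $d(\th)\rho(\th)\ge0$, the two-sided condition \eqref{eq:d rho} is equivalent to $d(\th)\rho(\th)/|\th|\to0$, so passing from $\tfrac1\th$ to $\tfrac1{|\th|}$ is harmless.

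To finish, I would combine this with the nondegeneracy input: by Proposition~\ref{prop:ND}, condition \eqref{eq:ND} gives $\liminf_{\th\to0}\big|\tfrac1\th\int_{\R^2}(F_\th-F_0)\dd F_0\big|>0$. Writing the ratio in condition (III) of Theorem~\ref{th:} as $\big(\tfrac1\th\int_{\R^2}a_\th\,\dd\,a_\th\big)\big/\big(\tfrac1\th\int_{\R^2}a_\th\,\dd F_0\big)$ -- numerator tending to $0$ by \eqref{eq:num}, denominator bounded away from $0$ -- shows this ratio tends to $0$, i.e.\ SND holds. The implication (III)$\implies$(I) of Theorem~\ref{th:} then delivers $\are_{T,S}(0)=1$.

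There is essentially no hard step: the proposition is a one-line majorization, consistent with the paper's calling it ``obvious.'' The only points deserving a word of care are (a) the well-definedness and finiteness of the signed-measure integral, which rests on $d(\th)<\infty$, and (b) the replacement of $\tfrac1\th$ by $\tfrac1{|\th|}$ in the two-sided limit, legitimate because $d(\th)\rho(\th)$ is nonnegative.
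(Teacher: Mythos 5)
Your proof is correct and follows essentially the same route as the paper's: the key step is the single majorization $\big|\int_{\R^2}(F_\th-F_0)\,\dd\,(F_\th-F_0)\big|\le\rho(\th)\,d(\th)$, which is exactly the paper's one-line argument (the paper writes the right-hand side with an extra factor of $2$, an inconsequential discrepancy with its own definition of $d(\th)$). The additional care you take over well-definedness of the Stieltjes integral and the splicing with Proposition~\ref{prop:ND} and Theorem~\ref{th:} is sound but adds nothing beyond what the paper treats as immediate.
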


Since $\rho(\th)\le d(\th)$, we immediately have 

\begin{corollary}\label{cor:variation}
If $d(\th)^2=\|F_\th-F_0\|^2=o(|\th|)$ as $\th\to0$ and condition \eqref{eq:ND} holds, then $\are_{T,S}(0)=1$.
\end{corollary}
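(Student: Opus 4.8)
The plan is to read the corollary as the special case of Proposition~\ref{prop:variation} in which the product $d(\th)\rho(\th)$ is controlled by $d(\th)^2$. Since condition \eqref{eq:ND} is assumed outright, the only thing to check is that the hypothesis $d(\th)^2=o(|\th|)$ forces condition \eqref{eq:d rho}, i.e.\ $\frac1\th\,d(\th)\rho(\th)\to0$ as $\th\to0$; the conclusion $\are_{T,S}(0)=1$ then follows from Proposition~\ref{prop:variation} without further work.

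The single substantive ingredient is the inequality $\rho(\th)\le d(\th)$, which the text records just before the corollary. I would justify it by writing the difference of cdf's at a point as the $(F_\th-F_0)$-measure of a lower-left rectangle: for every $(x,y)\in\R^2$,
\[
\big|F_\th(x,y)-F_0(x,y)\big|=\Big|\int_{(-\infty,x]\times(-\infty,y]}\dd\,(F_\th-F_0)\Big|\le\int_{\R^2}\big|\dd\,(F_\th-F_0)\big|=d(\th),
\]
and then taking the supremum over $(x,y)$ gives $\rho(\th)\le d(\th)$.

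Given this bound, the remaining estimate is immediate. Because $d(\th)\ge0$ and $\rho(\th)\ge0$, the quantity in \eqref{eq:d rho} has absolute value
\[
\Big|\tfrac1\th\,d(\th)\rho(\th)\Big|=\tfrac1{|\th|}\,d(\th)\rho(\th)\le\tfrac1{|\th|}\,d(\th)^2=\frac{o(|\th|)}{|\th|}\underset{\th\to0}\longrightarrow0,
\]
so \eqref{eq:d rho} holds. Together with the assumed \eqref{eq:ND}, Proposition~\ref{prop:variation} then yields SND and hence $\are_{T,S}(0)=1$. There is essentially no obstacle here: the corollary is a one-line consequence of Proposition~\ref{prop:variation}, and the only point requiring any care is the elementary measure-theoretic fact that the Kolmogorov distance is dominated by the total-variation distance.
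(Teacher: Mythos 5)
Your proof is correct and follows essentially the same route as the paper: the paper derives the corollary from Proposition~\ref{prop:variation} in one line via the inequality $\rho(\th)\le d(\th)$, which is exactly your argument, with your quadrant-measure justification of that inequality being the only (harmless) added detail. No issues.
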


\begin{remark}\label{rem:scheffe}
Suppose that $\mu$ is a (nonnegative) measure on $\mathcal B(\R^2)$ and that $F_\th$ is absolutely continuous w.r.\ to $\mu$ with density $f_\th$ for all $\th\in\Theta_0$. 
By the Scheff\'e theorem \cite{scheffe}, one then has $\|F_\th-F_0\|\underset{\th\to0}\longrightarrow0$
whenever 
\begin{equation}\label{eq:f_th->}
f_\th\underset{\th\to0}\longrightarrow f_0	
\end{equation}
$\mu$-almost surely.
\end{remark}

A stronger version of condition \eqref{eq:f_th->} is sufficient for \eqref{eq:num}: 

\begin{proposition}\label{prop:density}
Suppose that $\mu$ is a finite (nonnegative) measure on $\mathcal B(\R^2)$ and that $F_\th$ is absolutely continuous w.r.\ to $\mu$ with density $f_\th$ for $\th\in\Theta_0$ such that 
\begin{equation}\label{eq:density}
\limsup_{\th\to0}\frac{\|f_\th-f_0\|_\infty}{|\th|}<\infty,
\end{equation}
where $\|\cdot\|_\infty$ is the norm on the Lebesgue space $L^\infty(\R^2,\mu)$.  
Then \eqref{eq:num} holds. So, $\are_{T,S}(0)=1$ if condition \eqref{eq:ND} holds as well. 
\end{proposition}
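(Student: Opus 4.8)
The plan is to deduce the result from the already-proved Proposition~\ref{prop:variation} by showing that hypothesis \eqref{eq:density} forces the variation distance $d(\th)$ of \eqref{eq:d} to be $O(|\th|)$ as $\th\to0$; condition \eqref{eq:d rho} will then follow at once, and with it \eqref{eq:num}.

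First I would unpack the absolute-continuity hypothesis. Writing the distributions as $\dd F_\th=f_\th\,\dd\mu$ and $\dd F_0=f_0\,\dd\mu$, the signed measure $\dd\,(F_\th-F_0)$ equals $(f_\th-f_0)\,\dd\mu$, so that the variation distance in \eqref{eq:d} becomes
\begin{equation*}
d(\th)=\int_{\R^2}|\dd\,(F_\th-F_0)|=\int_{\R^2}|f_\th-f_0|\,\dd\mu.
\end{equation*}
The crucial step -- and the only place the finiteness of $\mu$ is used -- is the bound
\begin{equation*}
\int_{\R^2}|f_\th-f_0|\,\dd\mu\le\|f_\th-f_0\|_\infty\,\mu(\R^2),
\end{equation*}
which upgrades the $L^\infty$-control of \eqref{eq:density} to control of the $L^1$-type variation distance. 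By \eqref{eq:density} there are a finite constant $C$ and a neighborhood of $0$ on which $\|f_\th-f_0\|_\infty\le C|\th|$; hence $d(\th)\le C\,\mu(\R^2)\,|\th|$ there, i.e.\ $d(\th)=O(|\th|)$.

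Finally, since $\rho(\th)\le d(\th)$, I obtain $\tfrac1{|\th|}\,d(\th)\rho(\th)\le\tfrac1{|\th|}\,d(\th)^2=O(|\th|)\to0$, which is precisely condition \eqref{eq:d rho}; Proposition~\ref{prop:variation} then yields \eqref{eq:num}, and the stated conclusion $\are_{T,S}(0)=1$ under \eqref{eq:ND} follows from the last assertion of that proposition. (Equivalently, $d(\th)^2=O(\th^2)=o(|\th|)$ places one directly in the setting of Corollary~\ref{cor:variation}.) The main point to watch is thus not any analytic difficulty but simply that the finiteness of $\mu$ is what makes the passage from the uniform bound \eqref{eq:density} to the variation bound legitimate; for an infinite $\mu$ the argument would break down already at the displayed inequality.
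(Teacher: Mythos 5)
Your proof is correct. It takes a mildly different route from the paper's: you first establish the quantitative bound $d(\th)=\int_{\R^2}|f_\th-f_0|\,\dd\mu\le\|f_\th-f_0\|_\infty\,\mu(\R^2)=O(|\th|)$ and then conclude via Corollary~\ref{cor:variation} (equivalently, by verifying \eqref{eq:d rho} and citing Proposition~\ref{prop:variation}). The paper instead estimates the quantity in \eqref{eq:num} directly, writing $\dd\,(F_\th-F_0)=(f_\th-f_0)\,\dd\mu$ and bounding the integral by $\|F_\th-F_0\|\cdot\frac{\|f_\th-f_0\|_\infty}{|\th|}\cdot\mu(\R^2)$, where the first factor tends to $0$ by the Scheff\'e-type Remark~\ref{rem:scheffe} and the second stays bounded by \eqref{eq:density}. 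Both arguments hinge on the same point you flag: the finiteness of $\mu$ is what converts the $L^\infty$ control \eqref{eq:density} into control of the variation distance. Your version is marginally more informative, since it gives the rate $O(|\th|)$ for the expression in \eqref{eq:num} rather than mere convergence to $0$, and it bypasses Scheff\'e's theorem entirely by reusing the already-proved Proposition~\ref{prop:variation}.
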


Immediately from Proposition~\ref{prop:density}, we deduce 

\begin{corollary}\label{cor:density}
Suppose that $\mu$ is a finite (nonnegative) measure on $\mathcal B(\R^2)$ and that $F_\th$ is absolutely continuous w.r.\ to $\mu$ with density $f_\th$ for $\th\in\Theta_0$ such that 
$\frac{\partial}{\partial\th}f_\th(x,y)$ is uniformly bounded over $\th\in\Th_0$ and $(x,y)\in\R^2$.  
Then \eqref{eq:num} holds. So, $\are_{T,S}(0)=1$ if condition \eqref{eq:ND} holds as well.
\end{corollary}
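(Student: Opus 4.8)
The plan is to verify that the hypothesis of Corollary~\ref{cor:density} implies the Lipschitz-type bound \eqref{eq:density} required by Proposition~\ref{prop:density}, after which the conclusion is immediate. Since $\Th_0$ is a neighborhood of $\th_0=0$, I may assume (shrinking $\Th_0$ to a symmetric open interval around $0$ if necessary) that $\Th_0$ is an interval containing $0$, so that for each $\th\in\Th_0$ the whole segment between $0$ and $\th$ lies in $\Th_0$. Set $M:=\sup_{\th\in\Th_0,\,(x,y)\in\R^2}\big|\frac{\partial}{\partial\th}f_\th(x,y)\big|$, which is finite by the uniform-boundedness assumption.

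The key step is a pointwise application of the mean value theorem in the variable $\th$. For each fixed $(x,y)\in\R^2$, the map $\th\mapsto f_\th(x,y)$ is differentiable on $\Th_0$ with derivative bounded in absolute value by $M$; hence the mean value theorem gives, for every $\th\in\Th_0$,
\[
	|f_\th(x,y)-f_0(x,y)|\le M\,|\th|.
\]
Since this holds for \emph{every} $(x,y)$, it holds in particular $\mu$-almost everywhere, and therefore $\|f_\th-f_0\|_\infty\le M\,|\th|$ for all $\th\in\Th_0$. Consequently $\limsup_{\th\to0}\|f_\th-f_0\|_\infty/|\th|\le M<\infty$, which is exactly condition \eqref{eq:density}.

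With \eqref{eq:density} established, Proposition~\ref{prop:density}---whose remaining hypotheses, namely finiteness of $\mu$ and absolute continuity of $F_\th$ with density $f_\th$, are assumed verbatim in the corollary---yields \eqref{eq:num}, and the final assertion that $\are_{T,S}(0)=1$ whenever \eqref{eq:ND} holds as well is inherited directly from the same proposition. There is essentially no obstacle here: the only points meriting a moment's care are (a) ensuring the segment from $0$ to $\th$ stays inside $\Th_0$, handled by passing to an interval neighborhood, and (b) noting that an everywhere bound on $\frac{\partial}{\partial\th}f_\th$ automatically controls the $L^\infty(\R^2,\mu)$ essential-supremum norm, so no measurability or null-set subtleties arise.
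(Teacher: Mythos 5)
Your proof is correct and follows exactly the route the paper intends: the paper deduces Corollary~\ref{cor:density} ``immediately'' from Proposition~\ref{prop:density}, and your mean-value-theorem argument is precisely the one-line verification that the uniform bound on $\frac{\partial}{\partial\th}f_\th$ yields condition \eqref{eq:density}. The care you take about $\Th_0$ being an interval around $0$ and about the essential supremum versus the everywhere supremum is appropriate but raises no real issues.
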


\begin{proposition}\label{prop:F_th.diff}
If $F_\th(x,y)$ is differentiable in $\th$ at $\th=0$ uniformly over $(x,y)\in\mathbb{R}^2$ and $\frac{\partial}{\partial\th}F_\th|_{\th=0}$ is continuous and bounded, then \eqref{eq:num} holds. So, $\are_{T,S}(0)=1$ if \eqref{eq:ND} holds as well.
\end{proposition}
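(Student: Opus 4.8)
The plan is to reduce the claim to the estimate
\begin{equation*}
N(\th):=\int_{\R^2}(F_\th-F_0)\,d(F_\th-F_0)=o(\th)\qquad(\th\to0),
\end{equation*}
since \eqref{eq:num} is exactly $N(\th)/\th\to0$, and then to combine this with \eqref{eq:ND} and condition (III) of Theorem~\ref{th:} to conclude $\are_{T,S}(0)=1$. The first thing I would record is that, writing $a_\th:=F_\th-F_0$, the Stieltjes measure $d(F_\th-F_0)$ is the difference of the two laws of $(X,Y)$, so that $N(\th)=\E_\th a_\th(X,Y)-\E_0 a_\th(X,Y)$ and, crucially, its total variation is bounded: $\int_{\R^2}|d(F_\th-F_0)|=d(\th)=\|F_\th-F_0\|\le2$ for every $\th$, simply because $F_\th$ and $F_0$ are cdf's of probability measures. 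The naive bound $|N(\th)|\le\|a_\th\|_\infty\,d(\th)=\rho(\th)\,d(\th)$ only gives $O(\th)$ here (as $\rho(\th)=O(\th)$ while $d(\th)=O(1)$), one factor of $\th$ short; so the real work is to gain the extra little-$o$.

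To do this I would peel off the linear-in-$\th$ part using the uniform differentiability. With $\dot F:=\frac{\partial}{\partial\th}F_\th\big|_{\th=0}$, the hypothesis is precisely $\|a_\th/\th-\dot F\|_\infty\to0$, i.e.\ $a_\th=\th\dot F+r_\th$ with $\|r_\th\|_\infty=o(\th)$ and $\|a_\th\|_\infty=\rho(\th)=O(\th)$. Substituting this into $N(\th)=\int_{\R^2} a_\th\,d(F_\th-F_0)$ gives
\begin{equation*}
N(\th)=\th\int_{\R^2}\dot F\,d(F_\th-F_0)+\int_{\R^2}r_\th\,d(F_\th-F_0).
\end{equation*}
The second, remainder term is controlled by the total-variation bound above: its absolute value is at most $\|r_\th\|_\infty\,d(\th)\le2\|r_\th\|_\infty=o(\th)$.

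For the first term it is enough to show $\int_{\R^2}\dot F\,d(F_\th-F_0)\to0$ as $\th\to0$. Since $\rho(\th)=O(\th)\to0$ and $F_0$ is continuous, $F_\th\to F_0$ pointwise and hence the laws converge weakly, $\P_\th\Rightarrow\P_0$. As $\dot F$ is, by assumption, continuous and bounded, the definition of weak convergence yields $\int\dot F\,dF_\th\to\int\dot F\,dF_0$, that is, $\int_{\R^2}\dot F\,d(F_\th-F_0)\to0$; thus the first term is $\th\cdot o(1)=o(\th)$. Adding the two estimates gives $N(\th)=o(\th)$, which is \eqref{eq:num}, and then \eqref{eq:ND} together with condition (III) of Theorem~\ref{th:} gives $\are_{T,S}(0)=1$.

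The crux, I expect, is the clean separation of roles forced by the two available tools. The fixed, continuous, bounded function $\dot F$ cannot be handled by a total-variation bound (since $d(\th)$ need not tend to $0$ and $\dot F$ is not small), and is instead annihilated in the limit by weak convergence; the genuinely small but otherwise uncontrolled remainder $r_\th$ cannot be handled by weak convergence (it is not a fixed function), and is instead controlled by the elementary bound $d(\th)\le2$. Recognizing that each piece is amenable to exactly one of these two arguments — and that the boundedness of the total variation of a difference of probability measures is what makes the remainder estimate succeed — is the whole point; the remaining manipulations are routine.
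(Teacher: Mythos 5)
Your proof is correct and follows essentially the same route as the paper's: both decompose $F_\th-F_0$ into $\th\,\frac{\partial F_\th}{\partial\th}\big|_{\th=0}$ plus a uniformly $o(\th)$ remainder, bound the remainder term by the total variation $\int_{\R^2}|d(F_\th-F_0)|\le2$, and dispose of the derivative term via weak convergence $F_\th\Rightarrow F_0$ applied to the continuous bounded function $\frac{\partial F_\th}{\partial\th}\big|_{\th=0}$. No substantive differences.
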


Immediately from Proposition~\ref{prop:F_th.diff}, we deduce 

\begin{corollary}\label{cor:dF2}
If $\frac{\partial}{\partial\th}F_\th(x,y)$ and $\frac{\partial^2}{\partial\th^2}F_\th(x,y)$ are uniformly bounded over $\th\in\Th_0$ and $(x,y)\in\R^2$, then \eqref{eq:num} holds.  So, $\are_{T,S}(0)=1$ if \eqref{eq:ND} holds as well. 
\end{corollary}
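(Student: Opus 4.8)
The plan is to verify the two hypotheses of Proposition~\ref{prop:F_th.diff} and then invoke it directly. Writing $\dot F_0(x,y):=\frac{\partial}{\partial\th}F_\th(x,y)\big|_{\th=0}$, I must check that (a) $F_\th(x,y)$ is differentiable in $\th$ at $\th=0$ uniformly over $(x,y)\in\R^2$, and (b) $\dot F_0$ is continuous and bounded on $\R^2$. Once both hold, Proposition~\ref{prop:F_th.diff} yields \eqref{eq:num}, and the implication $\are_{T,S}(0)=1$ under the additional hypothesis \eqref{eq:ND} is part of that proposition's conclusion.

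For (a), I would fix $(x,y)$ and expand $F_\th(x,y)$ in $\th$ about $0$ via Taylor's theorem with the Lagrange form of the remainder. Since $\Th_0$ is a neighborhood of $0$ and, by hypothesis, the first two $\th$-partials of $F_\th(x,y)$ exist on $\Th_0$, for all sufficiently small $\th$ there is $\xi=\xi(x,y)$ strictly between $0$ and $\th$ with
\begin{equation*}
\frac{F_\th(x,y)-F_0(x,y)}{\th}-\dot F_0(x,y)=\frac{\th}{2}\,\frac{\partial^2}{\partial\th^2}F_\th(x,y)\Big|_{\th=\xi}.
\end{equation*}
Let $M:=\sup\{|\frac{\partial^2}{\partial\th^2}F_\th(x,y)|:\th\in\Th_0,(x,y)\in\R^2\}<\infty$ be the bound furnished by the hypothesis. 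Then the right-hand side is at most $\tfrac12|\th|M$ in absolute value, uniformly over $(x,y)$, so the difference quotient converges to $\dot F_0$ uniformly as $\th\to0$. This is exactly the required uniform differentiability, and it also exhibits $\dot F_0$ as the uniform limit over $(x,y)$ of the functions $g_\th:=(F_\th-F_0)/\th$.

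For (b), boundedness of $\dot F_0$ is immediate from the hypothesis that $\frac{\partial}{\partial\th}F_\th$ is uniformly bounded over $\Th_0\times\R^2$ (take $\th=0$, which is legitimate since $0\in\Th_0$). The continuity of $\dot F_0$ in $(x,y)$ is the one point that is not purely formal: it does \emph{not} follow from the stated boundedness hypotheses by themselves. Here I would invoke the standing assumption of the paper that every $F_\th$ is a continuous cdf, so that each $g_\th=(F_\th-F_0)/\th$ is continuous on $\R^2$; since $g_\th\to\dot F_0$ uniformly by the estimate of the previous paragraph, $\dot F_0$ is continuous as a uniform limit of continuous functions.

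With (a) and (b) established, Proposition~\ref{prop:F_th.diff} applies and gives \eqref{eq:num}, whence, together with \eqref{eq:ND}, one obtains SND and therefore $\are_{T,S}(0)=1$. The only genuine obstacle is the continuity claim in (b): the main work is in noticing that the stated hypotheses do not directly grant it, and that it must instead be recovered from the global continuity assumption on the cdfs combined with the uniform convergence produced by the second-derivative bound.
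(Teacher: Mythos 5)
Your proof is correct and follows exactly the route the paper intends: the paper derives this corollary ``immediately'' from Proposition~\ref{prop:F_th.diff}, and you simply supply the details --- the Lagrange-remainder estimate giving uniform differentiability from the second-derivative bound, and the continuity of $\frac{\partial}{\partial\th}F_\th\big|_{\th=0}$ as a uniform limit of the continuous difference quotients. Your observation that the continuity of the derivative is not a formal consequence of the boundedness hypotheses alone, but must be recovered from the standing continuity of the cdfs plus uniform convergence, is a worthwhile point that the paper leaves implicit.
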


\section{\texorpdfstring{$\are_{T,S}$}{are} for specific models of dependence}\label{models}

\subsection{
Some infinitely smooth models}\label{plackett} 

The simplest of such models is the {\bf ``linear'' model} given by formulas \eqref{eq:lin}--\eqref{eq:F_0=Gh}; Farlie's condition \eqref{eq:De=} need not be assumed here. It then follows immediately by Proposition~\ref{prop:variation} that for the ``linear'' model we have $\are_{T,S}(0)=1$ whenever $\int_{\R^2}\De\dd F_0\ne0$. In particular, this yields the result by Farlie \cite{far60}, mentioned in the introduction. 

Next, let us consider the {\bf bivariate normal (BVN) model}. 
That is, here it is assumed that the pair $(X,Y)$ is BVN with zero means, unit variances, and correlation coefficient $\th\in(-1,1)$. Letting now the measure $\mu$ in Corollary~\ref{cor:density} 
stand (say) for the standard BVN distribution (with zero means, unit variances, and correlation coefficient $0$), we see that 
\eqref{eq:num} holds. 
Next, using standard sufficient conditions for differentiation under the integral sign with respect to a parameter (see e.g.\ \cite[Theorem~2.27b]{folland}), for all real $x,y$ we have 
\begin{align*}
	\frac{\dd}{\dd\th}F_\th(x,y)\Big|_{\th=0}
	&=\int_{-\infty}^x \dd u\int_{-\infty}^y\dd v\; \frac{\dd}{\dd\th}g_\th(u,v)\Big|_{\th=0} \\ 
	&=\int_{-\infty}^x \dd u\int_{-\infty}^y\dd v\; uv\,\vpi(u)\vpi(v)
	=\vpi(x)\vpi(y)>0,
\end{align*}
where $g_\th$ is the joint pdf of the pair $(X,Y)$ 
and  
$\vpi$ is the standard normal pdf. So, condition \eqref{eq:derr ne0} holds. Thus, by Proposition~\ref{prop:ND} and Corollary~\ref{cor:density}, we confirm that $\are_{T,S}(0)=1$ for the BVN model. 

Consider now {\bf Plackett's model} \cite{plackett65
} given by the equation
\begin{equation}\label{eq:plackett}
	F_\th(x,y)-G(x) H(y)=\th\,  (G(x)-F_\th(x,y)) (H(y)-F_\th(x,y))
\end{equation}
for real $\th>-1$ and all real $x,y$. Plackett \cite{plackett65} showed that, for any given real $\th>-1$ and $x,y$,   equation \eqref{eq:plackett} has a single root $F_\th(x,y)$ satisfying the Fr\'echet \cite{frechet51} necessary conditions 
\begin{equation*}
	\max(0,G(x)+H(y)-1)\le F_\th(x,y)\le\min(G(x),H(y)). 
\end{equation*}
Equation \eqref{eq:plackett} is quadratic in $F_\th(x,y)$, and the mentioned single root is given by the formula 
\begin{equation}\label{eq:plackett'}
	F_\th=C_{P;\th}(G,H):=\frac{2(\theta +1) G  H}
	{1+\theta  (G+H)+\sqrt{1+2 \theta  (G+H-2 G H)+\theta ^2 (G-H)^2}}. 
\end{equation}
In particular, $\th=0$ corresponds to the independence, in accordance with \eqref{eq:indep}. 
It is clear that $F_\th$ is infinitely smooth in $\th$ in a neighborhood $\Th_0$ of $0$. Also,  
\begin{equation}\label{eq:ND,plack}
	\frac{\dd}{\dd\th}F_\th\Big|_{\th=0}(x,y)=(1-G(x))G(x)(1-H(y))H(y),
\end{equation}
so that the integral in \eqref{eq:derr ne0} is 
\begin{equation*}
	\int_\R(1-G)G\dd G\,\int_\R(1-H)H\dd H=\Big(\frac16\Big)^2\ne0. 
\end{equation*}
Thus, by Corollary~\ref{cor:dF2}, $\are_{T,S}(0)=1$ for Plackett's model. 

For each real $\th>-1$, the function $C_{P;\th}$ defined by \eqref{eq:plackett'} is a copula. A large number of examples of smooth one-parameter families of copulas can be found in \cite{nelsen06}, every one of them representing a model of dependence; in particular, see \cite[Table~4.1]{nelsen06}. It appears that for most, if not all, of those models one will have $\are_{T,S}(0)=1$. 

For instance, {\bf Frank's model} \cite{frank79} is given by the formula 
\begin{equation}\label{eq:frank}
	F_\th=C_{F;\th}(G,H):=-\frac1\th\,
	\ln \Big(1-\frac{(1-e^{-\th G}) (1-e^{-\th H})}{1-e^{-\th }}\Big) 
\end{equation}
for $\th\in\R\setminus\{0\}$, with $F_0=GH$, by continuity, so that \eqref{eq:indep} holds; cf.\ \cite[Table~4.1, page~116, formula (4.2.5)]{nelsen06}. 
In this case as well, $F_\th$ is infinitely smooth in $\th$, and 
\begin{equation*}
	\frac{\dd}{\dd\th}F_\th\Big|_{\th=0}(x,y)=\tfrac12\,(1-G(x))G(x)(1-H(y))H(y); 
\end{equation*}
cf.\ \eqref{eq:ND,plack}. 
Thus, by Corollary~\ref{cor:dF2}, $\are_{T,S}(0)=1$ for Frank's model as well.

\subsection{MICD models: \texorpdfstring{\underline{M}ixtures}{M} of 
\texorpdfstring{\underline{I}ndependence}{I}
and \texorpdfstring{\underline{C}omplete}{C}
\texorpdfstring{\underline{D}ependence}{D}
}\label{MICD}

Here we shall introduce four models of dependence: $\AS$, $\AL$, $\OS$, and $\OL$. In each of these models, the distribution of the random point $(X,Y)$ is a mixture of the uniform distribution on a finite union $A$ of (at most four) rectangles with sides parallel to the coordinate axes and the uniform distribution on the union $B$ of (at most two) segments of a straight line.
%
The sets $A=A^\mod_\th$ and $B=B^\mod_\th$ are subsets of the square $[-\tfrac12,\tfrac12]^2$ that depend on the choice $\mod\in\{\AS,\AL,\OS,\OL\}$ of an MICD model and on the value of the parameter $\th\in[-1,1]$, as follows: 
\begin{align*}
	A^\AS_\th&=\{(x,y)\in[-\tfrac12,\tfrac12]^2\colon |x|\le\tfrac{1-\th}2\text{ \underline{and} }|y|\le\tfrac{1-\th}2\}
	, \\ 
	A^\OS_\th&=\{(x,y)\in[-\tfrac12,\tfrac12]^2\colon
	|x|\le\tfrac{1-\th}2\text{ \underline{or} }|y|\le\tfrac{1-\th}2\}, \\ 
	A^\AL_\th&=\{(x,y)\in[-\tfrac12,\tfrac12]^2\colon
	|x|\ge\tfrac\th2\text{ \underline{and} }|y|\ge\tfrac\th2\}, \\ 
	A^\OL_\th&=\{(x,y)\in[-\tfrac12,\tfrac12]^2\colon
	|x|\ge\tfrac\th2\text{ \underline{or} }|y|\ge\tfrac\th2\}, \\ 
B^\AS_\th&=B^\OS_\th=\{(x,x)\colon\tfrac{1-\th}2\le|x|\le\tfrac12\}, \\ 
B^\AL_\th&=B^\OL_\th=\{(x,x)\colon|x|\le\tfrac\th2\}  
\end{align*}
for $\th\in[0,1]$; for $\th\in[-1,0)$ we only need to replace $(x,x)$ in the above definitions of the sets $B^\mod_\th$ by $(x,-x)$. 

The first letter ($\AA$ or $\O$) in the symbols $\AS$, $\AL$, $\OS$, and $\OL$ refers to the use of ``and'' or ``or'' in the above definitions of the sets $A=A^\mod_\th$, whereas the second letter ($\SSS$ or $\L$) in those symbols refers to the use of $\le$ (``smaller values") or $\ge$ (``larger values") in the those definitions. The support sets $A^\mod_\th\cup B^\mod_\th$ for $\mod\in\{\AS,\AL,\OS,\OL\}$ and $\th=2/10$ are shown in Fig.\ \ref{fig:MICDsupp}. 

\begin{figure}[h]
	\centering		\includegraphics[width=.8\textwidth]{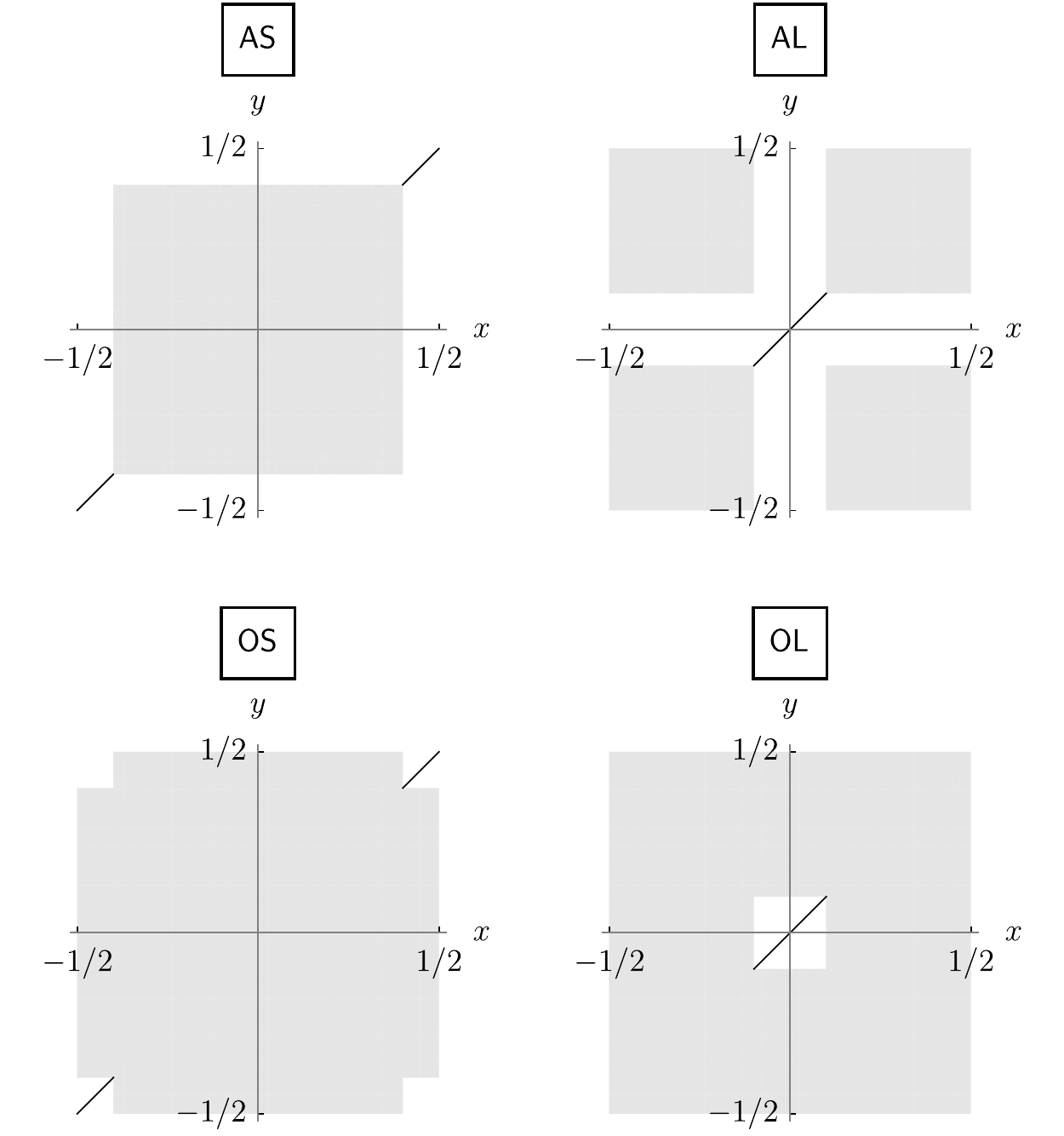}
	\caption{Support sets for the MICD models with $\th=2/10$.}
	\label{fig:MICDsupp}
\end{figure}

The definitions of the four MICD models are now completed by the condition that in each of these models the marginal distributions of $X$ and $Y$ be $\U[-\tfrac12,\tfrac12]$, that is, uniform on the interval $[-\tfrac12,\tfrac12]$. 

Thus, for each model $\mod\in\{\AS,\AL,\OS,\OL\}$ and each $\th\in[-1,1]$, the cdf $F^\mod_\th$ of the distribution of the random point $(X,Y)$ is a mixture of the cdf $F_{A^\mod_\th}$ of the uniform distribution on the set $A^\mod_\th$ and the cdf $F_{B^\mod_\th}$ of the uniform distribution on the set $B^\mod_\th$: 
\begin{equation}\label{eq:mix}
	F^\mod_\th=(1-w^\mod_\th)F_{A^\mod_\th}+w^\mod_\th F_{B^\mod_\th}, 
\end{equation}
where $w^\mod_\th:=|\th|$ if $\mod$ is one of the two ``and'' models, $\AS$ or $\AL$, whereas  $w^\mod_\th:=\th^2$ if $\mod$ is one of the two ``or'' models, $\OS$ or $\OL$. 
So, indeed in each of the MICD models the distribution of the random point $(X,Y)$ is a mixture of (at most four) distributions of random points with independent coordinates and the distribution of a random point on a straight line $y=x$ or $y=-x$, so that the coordinates of the latter random point are completely dependent of each other. 

Of course, in each of the MICD models the value $\th=0$ corresponds to the independence of $X$ and $Y$. 

One may note here that mixtures of uniform distributions over rectangles were briefly discussed, in similar contexts, on pages~821 and 834 of \cite{kruskal58}. Mixtures of distributions over (not necessarily straight) lines were considered on pages 829--832 there; see also e.g.\ \cite{schreyer-etal17} and references therein. 

The four MICD models may appear quite similar in spirit. However, as we will see, their $\are$ properties are very different. 

Analytically, the MICD models can be defined by the following formulas for the expected values of functions of the random point $(X,Y)$: 
\begin{align}
	\E^\AS_\th g(X,Y)&=\frac1{1-\th}\,\int_{[-(1-\th)/2,(1-\th)/2]^2}\dd u\,\dd v\, g(u,v) \notag \\ 
	&+\int_{[-1/2,-(1-\th)/2]\cup[(1-\th)/2,1/2]}\dd u\,g(u,u), \notag \\ 
	\E^\OS_\th g(X,Y)&=\int_{[-1/2,1/2]^2}\dd u\,\dd v\, g(u,v)\,\big(1-\ii{|u|\wedge|v|>(1-\th)/2}\big) \notag \\ 
	&+\th\,\int_{[-1/2,-(1-\th)/2]\cup[(1-\th)/2,1/2]}\dd u\,g(u,u), \notag \\ 
	\E^\AL_\th g(X,Y)&=\frac1{1-\th}\,\int_{[-1/2,1/2]^2}\dd u\,\dd v\, g(u,v)\,\ii{|u|\wedge|v|>\th/2} \notag \\ 
	&+\int_{[-\th/2,\th/2]}\dd u\,g(u,u), \notag \\ 
	\E^\OL_\th g(X,Y)&=\int_{[-1/2,1/2]^2}\dd u\,\dd v\, g(u,v)\,\big(1-\ii{|u|\vee|v|<\th/2}\big) \notag \\ 
	&+\th\,\int_{[-\th/2,\th/2]}\dd u\,g(u,u), \label{eq:E^OL} 
\end{align}
where $g$ is any bounded or nonnegative Borel-measurable function and 
$\E^\mod_\th$ denotes the expectation for model $\mod\in\{\AS,\AL,\OS,\OL\}$ and $\th\in[0,1]$; for $\th\in[-1,0)$ we need to replace $(u,v)$ and $(u,u)$ in these expressions by $(u,-v)$ and $(u,-u)$, respectively. 

Using the above formulas for $\E^\mod_\th g(X,Y)$ for 
$$g(u,v)=g_{x,y}(u,v):=\ii{u\le x,v\le y},$$ 
we can find expressions for the values $F^\mod_\th(x,y)=\E^\mod_\th g_{x,y}(X,Y)$ of the corresponding cdf's. 
Alternatively, one can obtain the joint cdf's $F^\mod_\th$ in the MICD models as certain linear combinations of the joint cdf's of uniform distributions on rectangles and on segments of straight lines. 

Recalling \eqref{eq:are'} and Proposition~\ref{prop:S}, and using \eqref{eq:E^OL} again, we find the following expressions for the asymptotic efficiency $\are_{T,S}^\mod(\th)$ of $T$ relative to $S$ for all the MICD models $\mod\in\{\AS,\AL,\OS,\OL\}$ and all $\th\in[0,1)$: 
\begin{align}
\are_{T,S}^\AS(\th)&=1, \label{eq:areAS} \\ 	
\are_{T,S}^\OS(\th)&=   	
\frac{9 (2 - 3 \th + 2 \th^2)^2 K_1(\th)}{10 (6 - 9 \th + 4 \th^2)^2 K_2(\th)}, \label{eq:areOS}  \\   
    K_1(\th)&:=10 + 10 \th + 10 \th^2 + 10 \th^3 - 230 \th^4 + 400 \th^5 - 267 \th^6 + 66 \th^7, 
    \notag \\  
   K_2(\th)&:=1 + \th + \th^2 + \th^3 - 11 \th^4 + 31 \th^5 - 27 \th^6 + 9 \th^7, \notag  \\ 
\are_{T,S}^\AL(\th)&=\frac{2 (1 + 2 \th + 3 \th^2 + 2 \th^3 + \th^4 + 9 \th^5)}{\th^2 (2 + 5 \th + 11 \th^2 + 
   18 \th^3)}, \label{eq:areAL}  \\ 	
\are_{T,S}^\OL(\th)&=   	
\frac{9 (5 + 4 \th^6 + 24 \th^7 - 33 \th^8)}{20 (1 + 8 \th^6 - 9 \th^8)},  \label{eq:areOL}    
\end{align}
with the convention that $\frac a0=\infty$ for $a>0$ concerning $\are_{T,S}^\AL$; 
these expressions for $\are_{T,S}^\mod(\th)$ have been computed with Mathematica. 
In particular, 
\begin{equation*}
	\are_{T,S}^\AS(0)=\are_{T,S}^\OS(0)=1,\quad\are_{T,S}^\AL(0)=\infty,\quad
	\are_{T,S}^\OL(0)=9/4.
\end{equation*}
For $\th\in(-1,0)$, one can use the symmetry $\are_{T,S}^\mod(\th)=\are_{T,S}^\mod(-\th)$ for all $\th\in(-1,1)$ and $\mod\in\{\AS,\AL,\OS,\OL\}$. 

Using \eqref{eq:areAS}--\eqref{eq:areOL}, it can be shown that $\are_{T,S}^\mod(\th)\ge1$ for all the MICD models $\mod\in\{\AS,\AL,\OS,\OL\}$ and all $\th\in[-1,1]$. Thus, at least for these models, Kendall's statistic is preferable to Spearman's in the $\are$ sense, not only for the independence tests, but also for testing for the dependence strength parameter $\th$. 

Graphs $\big\{\big(\th,\are_{T,S}^\mod(\th)\big)\colon\th\in[0,1)\big\}$ for $\mod\in\{\AS,\AL,\OS,\OL\}$ are shown in Fig.\ \ref{fig:MICDare}. 

\begin{figure}[h]
	\centering		\includegraphics[width=.9\textwidth]{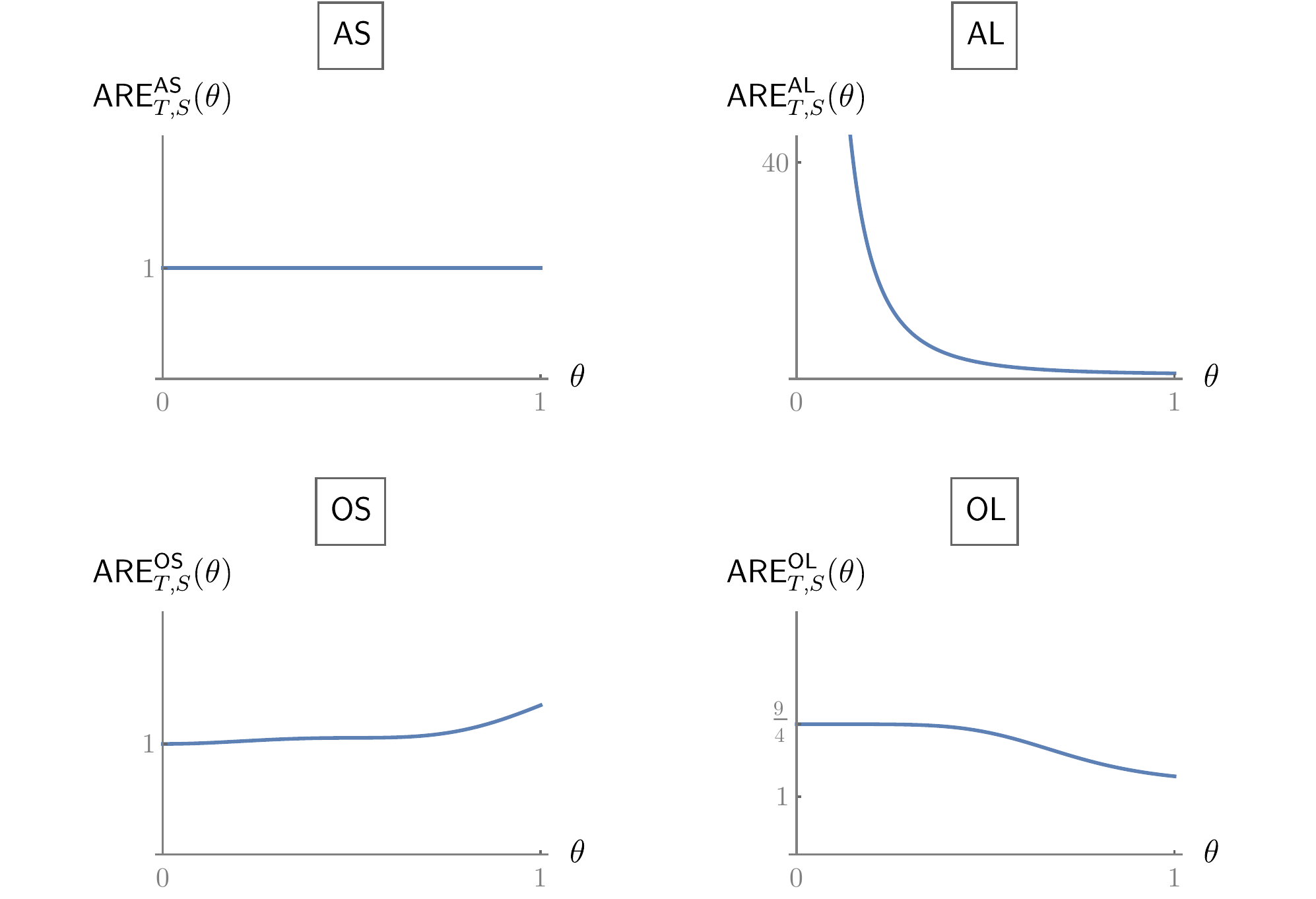}
	\caption{$\are_{T,S}^\mod$ for the MICD models.}
	\label{fig:MICDare}
\end{figure}

\bigskip
\hrule
\bigskip

In light of Theorem~\ref{th:} and results of Section~\ref{suff}, let us now 
explain the very different properties of the four MICD models concerning $\are_{T,S}$. 

It is easy to see that for all these models
\begin{equation}\label{eq:d=}
	d(\th)=O(w^\mod_\th), 
\end{equation}
with $d(\th)$ and $w^\mod_\th$ as in \eqref{eq:d} and \eqref{eq:mix}, respectively. 

If $\mod=\AS$, then $\frac{\dd}{\dd\th}F_\th(u,v)\Big|_{\th=0}=\frac14+uv$ for $u$ and $v$ in the interval $(-\frac12,\frac12)$, so that the integral in \eqref{eq:derr ne0} is $\frac14\ne0$. 
Also, by \eqref{eq:d=}, here $d(\th)=O(|\th|)$. 
So, by Corollary~\ref{cor:variation}, $\are^\AS_{T,S}(0)=1$ \big(in fact, as was already stated, $\are^\AS_{T,S}(\th)=1$ for $\th\in(-1,1)$\big).

If now $\mod=\OS$, then for all $u$ and $v$ in the interval $(-\frac12,\frac12)$ we have $\frac{\dd}{\dd\th}F_\th(u,v)\Big|_{\th=0}=0$ and $\frac{\dd^2}{\dd\th^2}F_\th(u,v)\Big|_{\th=0}=\frac12$. So, condition \eqref{eq:derr ne0} will hold only after the re-parametrization $\th\mapsto\th^2\sign\th$. This re-parametrization is indeed appropriate for the ``or'' models $\OS$ and $\OL$, with $w^\OS_\th=w^\OL_\th=\th^2$ and hence $d(\th)=O(\th^2)$ by \eqref{eq:d=}. So, by the ``re-parametrized'' version of Corollary~\ref{cor:variation}, we have $\are^\OS_{T,S}(0)=1$. 

Next, if $\mod=\AL$, then for all nonzero $u$ and $v$ in the interval $(-\frac12,\frac12)$ we  \break  
have $\frac{\dd}{\dd\th}F_\th(u,v)\big|_{\th=0}=(\frac12-|u|)(\frac12-|v|)\,\sign(uv)\ne0$, but the integral in the sufficient-for-nondegeneracy condition \eqref{eq:derr ne0} is $0$. In fact, the degeneracy here is rather deep, so that the denominator $\E_0 a_\th(X,Y)=\int_{\R^2}(F_\th-F_0)\dd F_0$ of the ratios in conditions (II) and (III) of Theorem~\ref{th:} is $\th^3/12$, which is much smaller for small $|\th|$ than the order of\;\ $\asymp|\th|$ that would be expected if the nondegeneracy condition \eqref{eq:ND} were satisfied. On the other hand, here $\E_\th a_\th(X,Y)=(3-\th) \th^2/12\asymp \th^2$, of the order ``normally'' expected in accordance with Remark~\ref{rem:SND} and the sufficient ``smoothness'' conditions stated in  Section~\ref{suff} after Proposition~\ref{prop:ND}. So, the mentioned deep degeneracy results in $\are_{T,S}^\AL(\th)$ going fast to $\infty$ as $\th\to0$, in accordance with \eqref{eq:areAL}. Therefore, by \eqref{eq:are'} and the continuity of $\si_j(\th)$ in $\th$, we end up with $\are_{T,S}^\AL(0)=\infty$. 

Finally, consider the case $\mod=\OL$. Recall that the degeneracy in the case $\mod=\AL$ occurs because for that model, whereas the derivative $\frac{\dd}{\dd\th}F_\th\Big|_{\th=0}$ is nonzero almost everywhere on the square $(-\frac12,\frac12)^2$, the average of this derivative given by the integral in \eqref{eq:derr ne0} is $0$. The degeneracy in the case $\mod=\OS$ was rather superficial, quickly cured by simple re-parametrization. In contrast, in the present case of $\mod=\OL$, the degeneracy occurs because 
$a_\th=F_\th-F_0$ is 
nonzero only on the square $(-\frac{|\th|}2,\frac{|\th|}2)^2$, quickly shrinking to the origin as $\th\to0$, and on this square $a_\th>0$. In fact, here 
\begin{equation}\label{eq:a^OL}
\begin{aligned}
	a_\th(u,v)&=F_\th(u,v)-F_0(u,v) \\ 
	&=
	\big(\tfrac{\th}2+\min(u,v)\big)\big(\tfrac{\th}2-\max(u,v)\big)
	\ii{\max(|u|,|v|)<\tfrac\th2}
\end{aligned}	
\end{equation}
for real $u,v$; 
here and in the rest of this consideration it is assumed that $\th\in(0,1)$; 
for $\th\in(-1,0)$, one needs to replace $\th$ and $v$ by $|\th|$ and $-v$, respectively. 
Therefore, 
the denominator of the ratios in conditions (II) and (III) of Theorem~\ref{th:} is 
\begin{equation}\label{eq:denOL}
	\int_{\R^2}a_\th\dd F_0=\int_{(-\frac{\th}2,\frac{\th}2)^2}a_\th(u,v)\dd u\dd v
	=\th^2\avee\limits_{(-\frac{\th}2,\frac{\th}2)^2} a_\th,
\end{equation}
where $\avee\limits_{(-\frac{\th}2,\frac{\th}2)^2} a_\th$ denotes the average value of $a_\th$ over the small square $(-\frac{\th}2,\frac{\th}2)^2$. 
On the other hand, in view of \eqref{eq:E^OL}, the numerator of the ratios in conditions (II) and (III) of Theorem~\ref{th:} is 
\begin{equation}\label{eq:numOL}
	\int_{\R^2}a_\th\dd F_\th=\th\,\int_{(-\frac{\th}2,\frac{\th}2)}a_\th(u,u)\dd u
	=\th^2\avee\limits_{\ell_\th} a_\th,
\end{equation}
where $\avee\limits_{\ell_\th} a_\th$ denotes the average value of $a_\th$ over the diagonal $\ell_\th$ of the small square $(-\frac{|\th|}2,\frac{|\th|}2)^2$ consisting of points of the form $(u,u)$. 
By \eqref{eq:a^OL}, $a_\th(u,v)<a_\th(u,u)$ for all points $(u,v)$ in the small square $(-\frac{|\th|}2,\frac{|\th|}2)^2$ that are off-diagonal, that is, with $v\ne u$. So, 
$\avee\limits_{(-\frac{\th}2,\frac{\th}2)^2} a_\th<\avee\limits_{\ell_\th} a_\th$ for $\th\in(0,1)$ and hence, by \eqref{eq:denOL} and \eqref{eq:numOL}, the ratio in condition (II) of Theorem~\ref{th:} is $>1$. Moreover, in view of \eqref{eq:a^OL}, $a_\th(u,v)$ is homogeneous of order $2$ in $\th$: 
\begin{equation*}
	a_\th(u,v)
	=\big(\tfrac{\th}2\big)^2\big(1+\min(\tu,\tv)\big)\big(1-\max(\tu,\tv)\big)
	\ii{\max(|\tu|,|\tv|)<1}
\end{equation*}
for real $u,v$, where $\tu:=u\big/\tfrac\th2$ and $\tv:=v\big/\tfrac\th2$. 
Therefore, the ratio in condition (II) of Theorem~\ref{th:} does not depend on $\th$ (and in fact equals $2$). Thus, $\are_{T,S}^\OL(0)\ne1$. 
This consideration is illustrated in Fig.\ \ref{fig:a^OL}, which shows the  
graph $\{(u,v,a_\th(u,v))\colon (u,v)\in(-\frac{\th}2,\frac{\th}2)^2\}$ for the MICD $\OL$ model with  $\th=\frac1{10}$. 
\begin{figure}[h]
	\centering		\includegraphics[width=.65\textwidth]{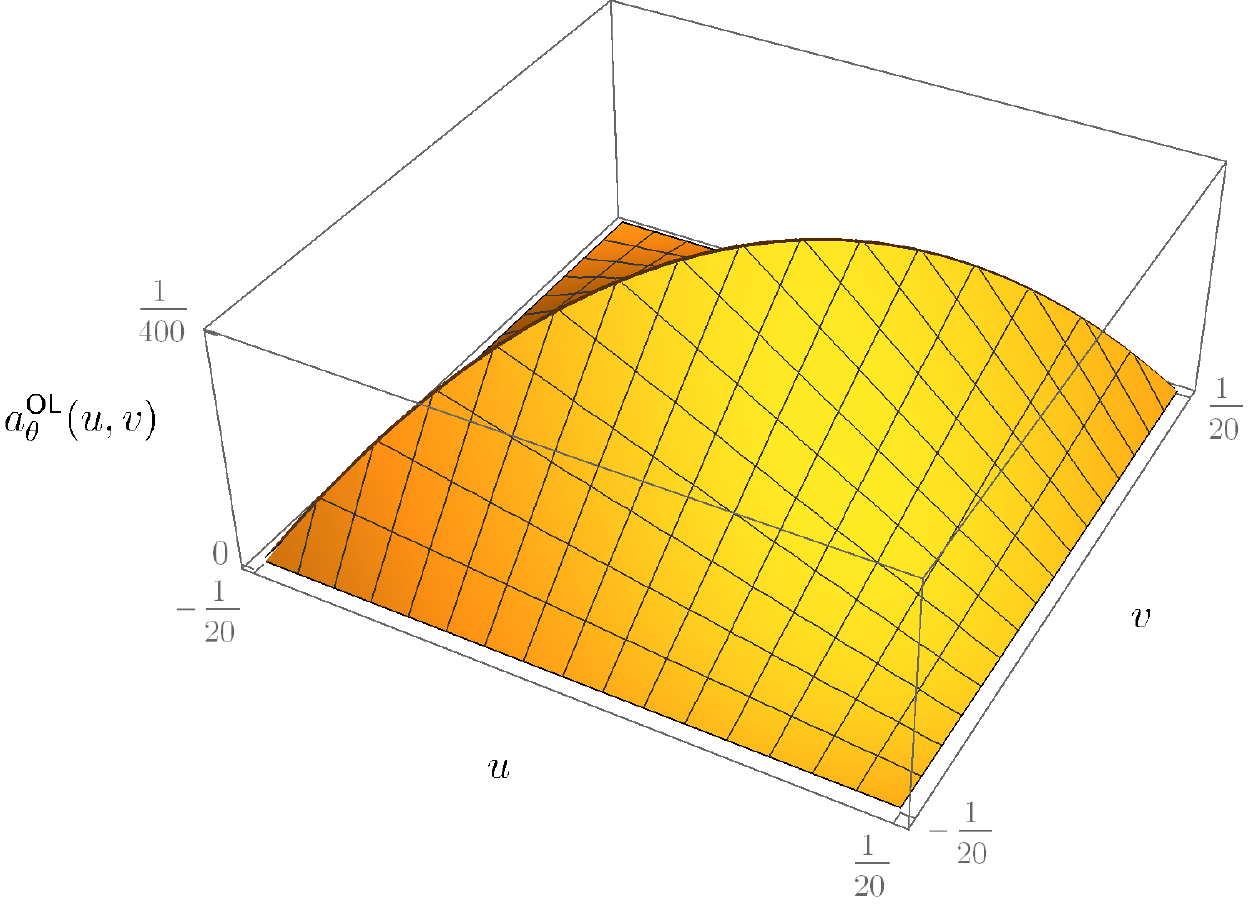}
	\caption{Graph of $a_\th$ over its support set $(-\frac{\th}2,\frac{\th}2)^2$ for the \emph{MICD} $\OL$ model with  $\th=\frac1{10}$.}
	\label{fig:a^OL}
\end{figure}

\section{Proofs}\label{proofs}

\begin{proof}[Proof of Proposition~\ref{prop:S}] 
This proof is based on a Berry--Esseen-type bound obtained by Chen and Shao \cite{chen07} for general U-statistics. 
The following lemma is a special case, sufficient for our purposes here, of formula (3.2) in \cite{chen07}.  

\begin{lemma}\label{lem:C-S}
Take any $\th\in\Th$. 
For $m\in\{2,3\}$ and natural $n\ge m$, let $h$ be a symmetric Borel-measurable function from $(\R^2)^m$ to $\R$ such that $|h|\le6$ and 
\begin{equation}\label{eq:si_h}
	\si_{h;\th}:=\sqrt{\Var_\th g_{h;\th}(P_1)}>0,\quad\text{where}\quad g_{h;\th}(P_1):=\E_\th\big(h(P_1,\dots,P_m)|P_1\big).
\end{equation} 
Let 
\begin{equation}\label{eq:U_n}
	U_n:=\binom nm^{-1}\,\sum_{1\le i_1<\cdots<i_m\le n}h\big(P_{i_1},\dots,P_{i_m}\big).  
\end{equation}
Then 
\begin{equation*}
	\sup_{z\in\R}\Big|\P_\th\Big(\frac{U_n-\E U_n}{m\si_{h;\th}/\sqrt n}\Big)-\Phi(z)\Big|\le\frac C{\si_{h;\th}^3\,\sqrt n}, 
\end{equation*}
where $C$ is a universal positive real constant. 
\end{lemma}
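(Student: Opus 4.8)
The plan is to obtain the lemma directly from the general Berry--Esseen bound (3.2) of Chen and Shao for U-statistics, using the uniform bound $|h|\le6$ to reduce every moment functional appearing in that bound to an absolute constant. First I would recall the precise form of (3.2): for a U-statistic $U_n$ of order $m$ with symmetric kernel $h$, using the first Hoeffding projection $g_{h;\th}(P_1)=\E_\th(h\mid P_1)$, whose mean is $\E_\th U_n=\E_\th h$ and whose variance is $\si_{h;\th}^2$, the bound (3.2) controls $\sup_z\big|\P_\th\big(\frac{U_n-\E_\th U_n}{m\si_{h;\th}/\sqrt n}\le z\big)-\Phi(z)\big|$ by a leading term proportional to $\frac1{\sqrt n}\,\frac{\E_\th|g_{h;\th}(P_1)-\E_\th h|^3}{\si_{h;\th}^3}$, together with remainder terms of order $1/n$ that arise from the higher-order (degenerate) components of the Hoeffding decomposition and are controlled by moments of $h$ such as $\E_\th h^2$ and by inverse powers of $\si_{h;\th}$.

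Next I would bound each of these quantities. By conditional Jensen and $|h|\le6$ we get $|g_{h;\th}|\le6$, hence $|g_{h;\th}-\E_\th h|\le12$ and $\E_\th|g_{h;\th}(P_1)-\E_\th h|^3\le12^3$; this makes the leading term at most $C_1/(\si_{h;\th}^3\sqrt n)$ for an absolute constant $C_1$. Similarly $\E_\th h^2\le36$, and more generally every bounded-order moment of $h$ is an absolute constant, so each $1/n$ remainder is at most a constant multiple of $1/(\si_{h;\th}^k\,n)$ for some $k\le3$.

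The final step is to collapse all of this into the single claimed bound $C/(\si_{h;\th}^3\sqrt n)$. Here I would use $n\ge m\ge2$, so that $1/n\le1/\sqrt n$, to turn every $O(1/n)$ remainder into an $O(1/\sqrt n)$ one; and I would use the elementary estimate $\si_{h;\th}\le(\E_\th g_{h;\th}^2)^{1/2}\le6$, which gives $1/\si_{h;\th}^k\le6^{\,3-k}/\si_{h;\th}^3$ whenever $k\le3$, so that each inverse power of $\si_{h;\th}$ is dominated by $1/\si_{h;\th}^3$. Summing the resulting finitely many absolute constants produces the universal constant $C$ and completes the proof.

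The step I expect to be the main obstacle is purely one of bookkeeping: transcribing the exact shape of Chen--Shao's (3.2), in particular the precise inverse powers of $\si_{h;\th}$ and the exact moment functionals multiplying each $1/n$ remainder, and verifying that no remainder carries an inverse power of $\si_{h;\th}$ larger than $3$ --- which is exactly what legitimizes the clean single denominator $\si_{h;\th}^3$. Because $m\in\{2,3\}$ is fixed and $h$ is uniformly bounded, there is no genuine analytic difficulty once (3.2) is in hand; the specialization amounts to replacing each moment by its absolute-constant bound and discarding the lower-order terms.
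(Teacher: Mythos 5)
Your proposal is correct and follows essentially the same route as the paper: the paper offers no proof of this lemma at all, simply asserting that it is ``a special case, sufficient for our purposes here, of formula (3.2) in Chen--Shao,'' and your argument is precisely the routine specialization of that formula, using $|h|\le 6$ to bound every moment functional by an absolute constant and $\si_{h;\th}\le 6$ to dominate every inverse power of $\si_{h;\th}$ of order at most $3$ by $\si_{h;\th}^{-3}$. The only caveat is the bookkeeping you yourself flag --- the exact shape of the remainder terms in (3.2) (they involve $\sqrt{\Var_\th h}\big/\si_{h;\th}$ at rate $n^{-1/2}$ rather than $O(1/n)$ corrections) --- but this does not affect the conclusion.
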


The condition in Lemma~\ref{lem:C-S} that the function $h$ is symmetric means, by definition, that $h$ is invariant w.r.\  to any permutation of its $m$ arguments. 

Introducing now the indicators 
\begin{equation}\label{eq:I,J}
	I_{ij}:=\ii{X_i>X_j}\quad\text{and}\quad J_{ij}:=\ii{Y_i>Y_j} 
\end{equation}
and recalling \eqref{eq:R}, we have 
\begin{align*}
	\sum_{i=1}^n R_i(\X_n)R_i(\Y_n)
	&=n+\sum_{i,j=1}^n I_{ij}+\sum_{i,k=1}^n J_{ik}+\sum_{i,j,k=1}^n I_{ij}J_{ik} \\ 
	&=n+\binom n2+\binom n2+\sum_{i,j,k=1}^n I_{ij}J_{ik} \\ 
	&=n^2+K_n+\binom n3\tS_n, 
\end{align*}
where 
\begin{equation*}
	K_n:=\sum_{i,j=1}^n I_{ij}J_{ij},   
\end{equation*}
\begin{equation}\label{eq:tS}
	\tS_n:=\binom n3^{-1}\,\sum_{1\le i<j<k\le n}h_3(P_i,P_j,P_k),  
\end{equation}
$P_i:=(X_i,Y_i)$, and 
\begin{equation}\label{eq:h_3}
	h_3(P_i,P_j,P_k):=I_{ij}J_{ik}+I_{ik}J_{ij}+I_{ji}J_{jk}+I_{jk}J_{ji}+I_{ki}J_{kj}+I_{kj}J_{ki}. 
\end{equation}
Here we have taken into account that $I_{ii}=J_{ii}=0$ for all $i$. 
So, in view of \eqref{eq:S}, 
\begin{equation*}
	S_n=2\frac{n-2}{n+1}\,\tS_n-3\frac{n+1}{n-1}+12\frac{n^2+K_n}{n(n^2-1)}. 
\end{equation*}
Since $0\le h_3(P_i,P_j,P_k)\le6$, 
$0\le\tS_n\le 6$, and $0\le K_n\le n^2$, we have 
\begin{equation}\label{eq:S_n=}
	S_n=2\tS_n-3+C_n/n;
\end{equation}
here and in what follows, $C_n$ will stand for various r.v.'s, possibly different even within the same expression, such that $|C_n|\le C$, where in turn $C$ denotes various universal positive real constants, also possibly different even within the same expression.

Let 
\begin{equation}\label{eq:mu_S,si_S}
	\mu_S(\th):=2\E_\th\tS_n-3,\quad 
	\si_S(\th):=6\si_{h_3;\th};  
\end{equation}
cf.\ \eqref{eq:si_h} and \eqref{eq:h_3}. 
Then, by \eqref{eq:S_n=}, \eqref{eq:tS}, and Lemma~\ref{lem:C-S}, for all real $z$ 
\begin{align*}
	\P_\th\Big(\frac{S_n-\mu_S(\th)}{\si_S(\th)/\sqrt n}\le z\Big)
	&=\P_\th\Big(\frac{\tS_n-\E_\th\tS_n}{3\si_{h_3;\th}/\sqrt n}\le z+\frac{C_n}{\si_{h_3;\th}\sqrt n}\Big) \\ 
	&\le\P_\th\Big(\frac{\tS_n-\E_\th\tS_n}{3\si_{h_3;\th}/\sqrt n}\le z+\frac{C}{\si_{h_3;\th}\sqrt n}\Big) \\ 	
	&\le\Phi\Big(z+\frac{C}{\si_{h_3;\th}\sqrt n}\Big)+ \frac{C}{\si_{h_3;\th}^3\sqrt n}\\ 	
	&\le\Phi(z)+ \frac{C}{\si_{h_3;\th}^3\sqrt n}; 	
\end{align*}
here at the last step we used the previously stated convention on the symbol $C$ and the fact that the standard normal density is bounded (by $1/\sqrt{2\pi}$) and hence $\Phi(w)-\Phi(z)\le C|w-z|$ for any real $z,w$; we also use used here the inequalities $0<\si_{h_3;\th}\le C$. 
Similarly,  
\begin{equation*}
	\P\Big(\frac{S_n-\mu_S(\th)}{\si_S(\th)/\sqrt n}\le z\Big)
\ge\Phi(z)- \frac{C}{\si_{h_3;\th}^3\sqrt n}. 
\end{equation*}
So, 
\begin{equation*}
	\sup_{z\in\R}\Big|\P\Big(\frac{S_n-\mu_S(\th)}{\si_S(\th)/\sqrt n}-\Phi(z)\Big| \le\frac{C}{\si_{h_3;\th}^3\sqrt n}\underset{n\to\infty}\longrightarrow0 	
\end{equation*}
uniformly over all $\th\in\Th_0$ -- for any $\Th_0\subseteq\Th$ such that 
\begin{equation*}
	\inf_{\th\in\Th_0}\si_{h_3;\th}>0. 
\end{equation*}

Thus, condition \eqref{eq:BE}--\eqref{eq:Z_n} holds with $T_{2,n}=S_n$, $\mu_2(\th)=\mu_S(\th)$, and	$\si_2(\th)=\si_S(\th)$, where $\mu_S(\th)$ and	$\si_S(\th)$ are as defined in \eqref{eq:mu_S,si_S}. 

Let us show that the definitions  
of $\mu_S(\th)$ and	$\si_S(\th)$ in \eqref{eq:mu_S,si_S} coincide with those in \eqref{eq:mu_S} and \eqref{eq:si_S}. 
In view of \eqref{eq:mu_S,si_S}, \eqref{eq:tS}, \eqref{eq:h_3}, and \eqref{eq:I,J}, 
\begin{multline*}
	\mu_S(\th)=2\E_\th\tS_n-3=2\E_\th h_3\big(P_1,P_2,P_3)-3
	=12\P_\th(X_1>X_2,Y_1>Y_3)-3
\end{multline*}
and 
\begin{align*}
	\P_\th(X_1>X_2,Y_1>Y_3)&=\int_{\R^2} \P_\th(x>X_2,y>Y_3)\P_\th(X_1\in\dd x,Y_1\in\dd y) \\
	&=\int_{\R^2} \P_\th(x>X_2)\P_\th(y>Y_3)\P_\th(X_1\in\dd x,Y_1\in\dd y) \\ 
	&=\E_\th G_\th(X_1)H_\th(Y_1),
\end{align*}
so that $\mu_S(\th)=12\E_\th G_\th(X)H_\th(Y)-3$; that is, 
the definition  
of $\mu_S(\th)$ in \eqref{eq:mu_S,si_S} coincides with that in \eqref{eq:mu_S}. 

Next, in view of \eqref{eq:si_h}, the independence and identical distribution of the pairs $P_i$'s, \eqref{eq:h_3}, \eqref{eq:I,J}, and \eqref{eq:Ps}, for any $(x,y)\in\R^2$ we have 
\begin{equation}\label{eq:g_h3}
	\begin{aligned}
	g_{h_3;\th}\big((x,y)\big)&=\E_\th h_3\big((x,y),P_2,P_3\big) \\ 
	&=\P_\th(x>X_2)\P_\th(y>Y_3)+\P_\th(x>X_3)\P_\th(y>Y_2) \\ 
&	+\P_\th(X_2>x,Y_2>Y_3)+\P_\th(X_2>X_3,Y_2>y) \\ 
&	+\P_\th(X_3>x,Y_3>Y_2)+\P_\th(X_3>X_2,Y_3>y) \\
&=2G_\th(x)H_\th(y)+2\P_\th(X>x,Y>Y_1)+2\P_\th(Y>y,X>X_1). 
\end{aligned}
\end{equation}
Next, 
\begin{equation}\label{eq:g_h3,1}
	\begin{aligned}
	\P_\th(X>x,Y>Y_1)&=\P_\th(X\le x,Y\le Y_1)-\P_\th(X\le x)-\P_\th(Y\le Y_1)+1 \\
	&=\E_\th F_\th(x,Y_1)-G_\th(x)+1/2  
\end{aligned}
\end{equation}
and similarly 
\begin{equation}\label{eq:g_h3,2}
	\P_\th(Y>y,X>X_1)=\E_\th F_\th(X_1,y)-H_\th(y)+1/2. 
\end{equation}
Combining \eqref{eq:g_h3}--\eqref{eq:g_h3,2}, we see that 
\begin{align}
	\tfrac12\,g_{h_3;\th}\big((x,y)\big)&=(1-G_\th(x))(1-H_\th(y))+\E_\th F_\th(x,Y)+\E_\th F_\th(X,y) \notag \\ 
	&=:\tilde g_{h_3;\th}\big((x,y)\big). \label{eq:tg} 
\end{align}
So, by \eqref{eq:mu_S,si_S} and \eqref{eq:si_h}, 
\begin{equation*}
	\si^2_S(\th)=144\Var_\th \tilde g_{h_3;\th}\big((X,Y)\big);  
\end{equation*}
that is, the definition  
of $\si_S(\th)$ in \eqref{eq:mu_S,si_S} coincides with that in \eqref{eq:si_S}. 


Thus, condition \eqref{eq:BE}--\eqref{eq:Z_n} holds with $T_{2,n}=S_n$, $\mu_2(\th)=\mu_S(\th)$, and	$\si_2(\th)=\si_S(\th)$, where $\mu_S(\th)$ and	$\si_S(\th)$ are now as defined in 
%
\eqref{eq:mu_S} and \eqref{eq:si_S}. 

To complete the proof of Proposition~\ref{prop:S}, it remains to verify 
that condition \eqref{eq:BE}--\eqref{eq:Z_n} holds with $T_{1,n}=T_n$, $\mu_1(\th)=\mu_T(\th)$, and	$\si_1(\th)=\si_T(\th)$, where $\mu_T(\th)$ and	$\si_T(\th)$ are as defined in \eqref{eq:mu_T} and \eqref{eq:si_T}. 
This verification is similar to, and significantly simpler than, the above verification for $T_{2,n}=S_n$. 
Here, in view of \eqref{eq:T} and \eqref{eq:I,J}, we have 
\begin{equation*}
	T_n=\binom n2^{-1}\sum_{1\le i<j\le n}h_2(P_i,P_j), 
\end{equation*}
where 
\begin{equation}\label{eq:h_2}
\begin{aligned}
	h_2(P_i,P_j)&:=
	\sign(X_i-X_j)(Y_i-Y_j) \\ 
	&=I_{ij}J_{ij}+(1-I_{ij})(1-J_{ij})-(1-I_{ij})J_{ij}-I_{ij}(1-J_{ij}) \\ 
	&=4I_{ij}J_{ij}-2I_{ij}-2J_{ij}+1. 
\end{aligned}	
\end{equation}
Then, quite similarly to the corresponding reasoning for $T_{2,n}=S_n$, we see that condition \eqref{eq:BE}--\eqref{eq:Z_n} holds with $T_{1,n}=T_n$ and 
\begin{equation}\label{eq:mu_T,si_T}
	\mu_T(\th):=\E_\th T_n=\E_\th h_2(P_1,P_2),\quad 
	\si_T(\th):=2\si_{h_2;\th}  
\end{equation}
uniformly over all $\th\in\Th_0$ -- for any $\Th_0\subseteq\Th$ such that 
\begin{equation*}
	\inf_{\th\in\Th_0}\si_{h_2;\th}>0, 
\end{equation*}
where $\si_{h_2;\th}$ is understood according to \eqref{eq:si_h}. 

So, to complete the proof of Proposition~\ref{prop:S}, it remains to verify that the definitions   
of $\mu_T(\th)$ and $\si_T(\th)$ in \eqref{eq:mu_T,si_T} coincide with those in \eqref{eq:mu_T} and \eqref{eq:si_T}. This is easy to do. Indeed, 
%
in view of \eqref{eq:si_h} and \eqref{eq:h_2}, for all $(x,y)\in\R^2$,
\begin{equation}\label{eq:g_h2}
	\begin{aligned}
	g_{h_2;\th}\big((x,y)\big)&=\E_\th h_2\big((x,y),P_2\big) \\ 
	&=4\P_\th(x>X_2,y>Y_2)-2\P_\th(x>X_2)-2\P_\th(y>Y_2)+1 \\ 
	&=4F_\th(x,y)-2G_\th(x)-2H_\th(y)+1. 
\end{aligned}
\end{equation}
So, in view of \eqref{eq:si_h}, the definition   
of $\si_T(\th)$ in \eqref{eq:mu_T,si_T} coincides with that in \eqref{eq:si_T}. 

Finally, 
by \eqref{eq:mu_T,si_T} and \eqref{eq:g_h2}, 
\begin{equation*}
	\begin{aligned}
	\mu_T(\th)=\E_\th h_2(P_1,P_2)&=\E_\th g_{h_2;\th}\big((X,Y)\big) \\ 
	&=4\E_\th F_\th(X,Y)-2\E_\th G_\th(X)-2\E_\th H_\th(Y)+1 \\ 
	&=4\E_\th F_\th(X,Y)-1, 
		\end{aligned}
\end{equation*}
so that the definition   
of $\mu_T(\th)$ in \eqref{eq:mu_T,si_T} coincides with that in \eqref{eq:mu_T}. 

Proposition~\ref{prop:S} is now completely proved. 
\end{proof}

\begin{proof}[Proof of Theorem~\ref{th:}] 
Let $U:=G(X)$ and $V:=H(Y)$. Then $U\sim\U[0,1]$ and $V\sim\U[0,1]$. Moreover, $U$ and $V$ are independent w.r.\ to $\P_0$. Therefore and in view of \eqref{eq:si_T} and \eqref{eq:si_S}, 
\begin{multline}\label{eq:si_T^2=}
	\si_T(0)^2=16\Var_0(2UV-U-V)=16\times4\,\Var_0(U-\tfrac12)(V-\tfrac12) \\ 
	=16\times4\,\E_0(U-\tfrac12)^2(V-\tfrac12)^2=16\times4\,(\tfrac1{12})^2
	=\tfrac49
\end{multline}
and 
\begin{equation}\label{eq:si_S^2=}
	\si_S(0)^2=144\Var_0\big((1-U)(1-V) +\tfrac U2+\tfrac V2\big)
	=144\Var_0(U-\tfrac12)(V-\tfrac12)
	=1. 
\end{equation}  
By \eqref{eq:mu_T} and \eqref{eq:mu_S},
\begin{equation*}
	\mu_T(\th)=4\E_\th F_\th(X,Y)-4\E_0 F_0(X,Y) 
\end{equation*}
and
\begin{equation}\label{eq:mu_S=}
	\mu_S(\th)=12\E_\th F_0(X,Y)-12\E_0 F_0(X,Y), 
\end{equation}
whence 
\begin{equation}\label{eq:3mu_T-mu_S}
	\tfrac1{12}\,\big(3\mu_T(\th)-\mu_S(\th)\big)=\E_\th (F_\th-F_0)(X,Y)=\E_\th a_\th(X,Y),  
\end{equation}
by \eqref{eq:a}. 
Here and in the rest of the proof, $\th\in\Th_0\setminus\{0\}$, so that \eqref{eq:mu} holds. 

Letting $(X_0,Y_0)$ and $(X_\th,Y_\th)$ be independent random points in $\R^2$ with cdf's $F_0$ and $F_\th$, respectively (w.r.\ to some probability measure $\P$ on $(\Om,\A)$),  
we have 
\begin{multline*}
	\E_\th F_0(X,Y)=\E F_0(X_\th,Y_\th)
	=\P(X_0<X_\th,Y_0<Y_\th) \\ 
	= 1-\P(X_\th<X_0)-\P(Y_\th<Y_0)+\P(X_\th<X_0,Y_\th<Y_0) \\ 
	=\P(X_\th<X_0,Y_\th<Y_0)
	=\E F_\th(X_0,Y_0)
	=\E_0 F_\th(X,Y),  
\end{multline*}
where $\E$ denotes the expectation w.r.\ to $\P$, and we used \eqref{eq:const marg} to observe that $\P(X_\th<X_0)=\frac12=\P(Y_\th<Y_0)$.
So, by \eqref{eq:mu_S=} and \eqref{eq:a}, 
\begin{equation*}
	\tfrac1{12}\,\mu_S(\th)=\E_0 F_\th(X,Y)-\E_0 F_0(X,Y)=\E_0 a_\th(X,Y). 
\end{equation*}
Comparing this with \eqref{eq:3mu_T-mu_S}, we see that 
\begin{equation}\label{eq:mu_T/mu_S}
	\frac{\mu_T(\th)}{\mu_S(\th)}=\frac13\,\Big(1+\frac{\E_\th a_\th(X,Y)}{\E_0 a_\th(X,Y)}\Big). 
\end{equation}
Note also that $\mu_T(0)=0=\mu_S(0)$. 
It now follows by the two-side version of \eqref{eq:are}, condition $\th_0=0$, \eqref{eq:si_T^2=}, \eqref{eq:si_S^2=}, \eqref{eq:mu}, and \eqref{eq:mu_T/mu_S} that 
\begin{equation}\label{eq:mu incr used}
	\are_{T,S}(0)=1\iff\lim_{\th\to0}\frac{\mu_T(\th)}{\mu_S(\th)}=\frac23
	\iff \lim\limits_{\th\to0}\dfrac{\E_\th a_\th(X,Y)}{\E_0 a_\th(X,Y)}=1. 
\end{equation}
So, conditions (I) and (II) in Theorem~\ref{th:} are equivalent to each other. 

The equivalence of conditions (II) and (III) in Theorem~\ref{th:} follows immediately, because 
\begin{align*}
	\E_0 a_\th(X,Y)
	=\int_{\R^2}F_\th\dd F_0-\int_{\R^2}F_0\dd F_0 =\int_{\R^2}(F_\th-F_0)\dd F_0 
\end{align*}
and
\begin{align*}
	\E_\th a_\th(X,Y)-\E_0 a_\th(X,Y)
	&=\int_{\R^2}(F_\th-F_0)\dd F_\th-\int_{\R^2}(F_\th-F_0)\dd F_0 \\ 
	&=\int_{\R^2}(F_\th-F_0)\dd\,(F_\th-F_0).
\end{align*}
The proof of Theorem~\ref{th:} is now complete. 
\end{proof}

\begin{proof}[Proof of Proposition~\ref{prop:ND}]
The implication \eqref{eq:der ne0}\,$\implies$\,\eqref{eq:ND} follows immediately from the definition of the derivative. The implication \eqref{eq:derr ne0}\,$\implies$\,\eqref{eq:ND} follows immediately by the Fatou lemma. 
\end{proof}

\begin{proof}[Proof of Proposition~\ref{prop:variation}]
This follows immediately because 
\begin{equation*}
\Big|\int_{\R^2}(F_\th-F_0)\,d(F_\th-F_0)\Big|
  \le\rho(\th)\int_{\R^2}|d(F_\th-F_0)|
  =2d(\th)\rho(\th). 
\end{equation*}
\end{proof}

\begin{proof}[Proof of Proposition~\ref{prop:density}]
By Remark~\ref{rem:scheffe}, condition \eqref{eq:density} implies $\|F_\th-F_0\|\underset{\th\to0}\longrightarrow0$. 
So, 
\begin{align*}
\Big|\frac1{\th}\int_{\R^2}(F_\th-F_0)\,\dd\,(F_\th-F_0)\Big|
&=\Big|\frac1{\th}\int_{\R^2}(F_\th-F_0)\,(f_\th-f_0)\,\dd\mu\Big| \\  &\le\|F_\th-F_0\|\,\frac{\|f_\th-f_0\|_\infty}{|\th|}\,\int_{\R^2}\dd\mu\underset{\th\to0}\longrightarrow0. 
\end{align*}
\end{proof}

\begin{proof}[Proof of Proposition~\ref{prop:F_th.diff}]
The uniform differentiability condition in this proposition means that
\begin{equation*}
s(\th):=\sup_{(x,y)\in\R^2}\bigg|\frac{F_\th(x,y)-F_0(x,y)}{\th}-\Big(\frac{\partial F_\th(x,y)}{\partial\th}\Big|_{\th=0}\Big)\bigg|\underset{\th\to0}\longrightarrow0.
\end{equation*}
So,
\begin{multline*}
\Big|\frac1{\th}\int_{\R^2}(F_\th-F_0)\,d(F_\th-F_0)\Big|  \\ 
  \le\bigg|\int_{\R^2}\Big(\frac{\partial F_\th}{\partial\th}\Big|_{\th=0}\Big)d(F_\th-F_0)\bigg| 
  +s(\th)\int_{\R^2}|d(F_\th-F_0)|\underset{\th\to0}\longrightarrow0,
\end{multline*}
since $s(\th)\rightarrow0$ implies $F_\th(x,y)\rightarrow F_0(x,y)$ for all $(x,y)\in\mathbb{R}^2$, whence $F_\th\Rightarrow F_0$ weakly.
\end{proof}

{\bf Acknowledgment.} On the author's request, R.\ Molzon assisted with a literature search, which resulted in additional references.


\begin{thebibliography}{10}

\bibitem{bent96}
V.~Bentkus and F.~G{\"o}tze.
\newblock The {B}erry-{E}sseen bound for {S}tudent's statistic.
\newblock {\em Ann. Probab.}, 24(1):491--503, 1996.

\bibitem{chen07}
L.~H.~Y. Chen and Q.-M. Shao.
\newblock Normal approximation for nonlinear statistics using a concentration
  inequality approach.
\newblock {\em Bernoulli}, 13(2):581--599, 2007.

\bibitem{croux-dehon}
C.~Croux and C.~Dehon.
\newblock Influence functions of the {S}pearman and {K}endall correlation
  measures.
\newblock {\em Stat. Methods Appl.}, 19(4):497--515, 2010.

\bibitem{far60}
D.~J.~G. Farlie.
\newblock The performance of some correlation coefficients for a general
  bivariate distribution.
\newblock {\em Biometrika}, 47:307--323, 1960.

\bibitem{folland}
G.~B. Folland.
\newblock {\em Real analysis}.
\newblock Pure and Applied Mathematics (New York). John Wiley \& Sons Inc., New
  York, 1984.
\newblock Modern techniques and their applications, A Wiley-Interscience
  Publication.

\bibitem{frank79}
M.~J. Frank.
\newblock On the simultaneous associativity of {$F(x,\,y)$} and
  {$x+y-F(x,\,y)$}.
\newblock {\em Aequationes Math.}, 19(2-3):194--226, 1979.

\bibitem{fraser57}
D.~A.~S. Fraser.
\newblock {\em Nonparametric methods in statistics}.
\newblock John Wiley \& Sons Inc., New York, 1957.

\bibitem{frechet51}
M.~Fr\'{e}chet.
\newblock Sur les tableaux de corr\'{e}lation dont les marges sont donn\'{e}es.
\newblock {\em Ann. Univ. Lyon. Sect. A. (3)}, 14:53--77, 1951.

\bibitem{hajek99}
J.~H{\'a}jek, Z.~{\v{S}}id{\'a}k, and P.~K. Sen.
\newblock {\em Theory of rank tests}.
\newblock Probability and Mathematical Statistics. Academic Press Inc., San
  Diego, CA, second edition, 1999.

\bibitem{hoeff48}
W.~Hoeffding.
\newblock A class of statistics with asymptotically normal distribution.
\newblock {\em Ann. Math. Statistics}, 19:293--325, 1948.

\bibitem{hoeff63}
W.~Hoeffding.
\newblock Probability inequalities for sums of bounded random variables.
\newblock {\em J. Amer. Statist. Assoc.}, 58:13--30, 1963.

\bibitem{hoeff55}
W.~Hoeffding and J.~R. Rosenblatt.
\newblock The efficiency of tests.
\newblock {\em Ann. Math. Statist.}, 26:52--63, 1955.

\bibitem{ken48}
M.~Kendall and J.~D. Gibbons.
\newblock {\em Rank correlation methods}.
\newblock A Charles Griffin Title. Edward Arnold, London, fifth edition, 1990.

\bibitem{kenstu79}
M.~G. Kendall and A.~A. Stuart.
\newblock {\em The advanced theory of statistics. {V}ol. 2}.
\newblock Charles Griffin, London \& High Vicombe, fourth edition, 1979.

\bibitem{kiefer68}
J.~C. Kiefer.
\newblock Statistical inference.
\newblock In {\em The future of statistics}, volume 1967 of {\em Proceedings of
  a Conference on the Future of Statistics held at the University of Wisconsin,
  Madison, Wisconsin, June}, pages 139--142. Academic Press, New York-London,
  1968.

\bibitem{kor94}
V.~S. Koroljuk and Y.~V. Borovskich.
\newblock {\em Theory of {$U$}-statistics}, volume 273 of {\em Mathematics and
  its Applications}.
\newblock Kluwer Academic Publishers Group, Dordrecht, 1994.
\newblock Translated from the 1989 Russian original by P. V. Malyshev and D. V.
  Malyshev and revised by the authors.

\bibitem{kruskal58}
W.~H. Kruskal.
\newblock Ordinal measures of association.
\newblock {\em J. Amer. Statist. Assoc.}, 53:814--861, 1958.

\bibitem{michel-pfanzagl71}
R.~Michel and J.~Pfanzagl.
\newblock The accuracy of the normal approximation for minimum contrast
  estimates.
\newblock {\em Z. Wahrscheinlichkeitstheorie und Verw. Gebiete}, 18:73--84,
  1971.

\bibitem{nelsen06}
R.~B. Nelsen.
\newblock {\em An introduction to copulas}.
\newblock Springer Series in Statistics. Springer, New York, second edition,
  2006.

\bibitem{noe55}
G.~E. Noether.
\newblock On a theorem of {P}itman.
\newblock {\em Ann. Math. Statist.}, 26:64--68, 1955.

\bibitem{student_els}
I.~Pinelis.
\newblock On the {B}erry--{E}sseen bound for the {S}tudent statistic.
\newblock In I.~Pinelis, editor, {\em Inequalities and Extremal Problems in
  Probability and Statistics: Selected Topics}, pages 139--158. Academic Press,
  Elsevier, 2017.

\bibitem{MLE-BE}
I.~Pinelis.
\newblock Optimal-order uniform and nonuniform bounds on the rate of
  convergence to normality for maximum likelihood estimators.
\newblock {\em Electron. J. Statist.}, 11(1):1160--1179, 2017.

\bibitem{mono}
I.~Pinelis and R.~Molzon.
\newblock Monotonicity properties of the asymptotic relative efficiency between
  common correlation statistics in the bivariate normal model (preprint),
  ar{X}iv:0907.5448 [math.{ST}].

\bibitem{nonlinear-publ}
I.~Pinelis and R.~Molzon.
\newblock Optimal-order bounds on the rate of convergence to normality in the
  multivariate delta method.
\newblock {\em Electron. J. Stat.}, 10(1):1001--1063, 2016.

\bibitem{pitman48}
E.~J.~G. Pitman.
\newblock Lecture notes on non-parametric inference.
\newblock Univ. of N. Carolina, mimeographed, 1948.

\bibitem{plackett65}
R.~L. Plackett.
\newblock A class of bivariate distributions.
\newblock {\em J. Amer. Statist. Assoc.}, 60:516--522, 1965.

\bibitem{scheffe}
H.~Scheff{\'e}.
\newblock A useful convergence theorem for probability distributions.
\newblock {\em Ann. Math. Statistics}, 18:434--438, 1947.

\bibitem{schreyer-etal17}
M.~Schreyer, R.~Paulin, and W.~Trutschnig.
\newblock On the exact region determined by {K}endall's {$\tau$} and
  {S}pearman's {$\rho$}.
\newblock {\em J. R. Stat. Soc. Ser. B. Stat. Methodol.}, 79(2):613--633, 2017.

\bibitem{stu54a}
A.~Stuart.
\newblock Asymptotic relative efficiencies of distribution-free tests of
  randomness against normal alternatives.
\newblock {\em J. Amer. Statist. Assoc.}, 49:147--157, 1954.

\bibitem{stuart-ord}
A.~Stuart and J.~K. Ord.
\newblock {\em Kendall's advanced theory of statistics. {V}ol. 2}.
\newblock The Clarendon Press, Oxford University Press, New York, fifth
  edition, 1991.
\newblock Classical inference and relationship.

\bibitem{xu-etal13}
W.~W. Xu, Y.~Y. Hou, Y.~S. Hung, and Y.~Y. Zou.
\newblock A comparative analysis of {S}pearman's rho and {K}endall's tau in
  normal and contaminated normal models.
\newblock {\em Signal Processing}, 93:261--276, 2013.

\end{thebibliography}

\def\cprime{$'$} \def\polhk#1{\setbox0=\hbox{#1}{\ooalign{\hidewidth
  \lower1.5ex\hbox{`}\hidewidth\crcr\unhbox0}}}
  \def\polhk#1{\setbox0=\hbox{#1}{\ooalign{\hidewidth
  \lower1.5ex\hbox{`}\hidewidth\crcr\unhbox0}}}
  \def\polhk#1{\setbox0=\hbox{#1}{\ooalign{\hidewidth
  \lower1.5ex\hbox{`}\hidewidth\crcr\unhbox0}}} \def\cprime{$'$}
  \def\polhk#1{\setbox0=\hbox{#1}{\ooalign{\hidewidth
  \lower1.5ex\hbox{`}\hidewidth\crcr\unhbox0}}} \def\cprime{$'$}
  \def\polhk#1{\setbox0=\hbox{#1}{\ooalign{\hidewidth
  \lower1.5ex\hbox{`}\hidewidth\crcr\unhbox0}}} \def\cprime{$'$}
  \def\cprime{$'$}

\end{document}